\newtheorem{theorem}{Theorem}
\newtheorem{definition}[theorem]{Definition}
\newtheorem{lemma}[theorem]{Lemma}
\newcommand{\addresseshere}{%
  \enddoc@text\let\enddoc@text\relax
}
\newcommand{\rr}{\mathbb{R}}
\newcommand{\Hess}{\nabla^2}
\newcommand{\dive}{\mathrm{div}}
\newcommand{\Tr}{\mathrm{Tr}}
\newcommand{\dist}{\mathrm{dist}}
\begin{document}
\title{Serrin's type overdetermined problems in convex cones}
\author{Giulio Ciraolo and Alberto Roncoroni}
\thanks{}
\address{Giulio Ciraolo, Dipartimento di Matematica e Informatica, Universit\`a degli Studi di Palermo, Via Archirafi 34, 90123, Palermo, Italy}
\email{giulio.ciraolo@unipa.it}
\address{Alberto Roncoroni, Dipartimento di Matematica F. Casorati, Universit\`a degli Studi di Pavia, Via Ferrata 5, 27100 Pavia, Italy}
\email{alberto.roncoroni01@universitadipavia.it}

\date{\today}
\subjclass[2010]{35N25, 35B06, 53C24, 35R01} 
\keywords{Overdetermined problems, rigidity, torsion problem, convex cones, mixed boundary conditions.}

\begin{abstract}
We consider overdetermined problems of Serrin's type in convex cones for (possibly) degenerate operators in the Euclidean space as well as for a suitable generalization to space forms. We prove rigidity results by showing that the existence of a solution implies that the domain is a spherical sector. 
\end{abstract}

\maketitle

\section{Introduction}
Given a bounded domain $E \subset  \mathbb{R}^N$, $N\geq 2$, the classical Serrin's overdetermined problem \cite{Serrin} asserts that there exists a solution to 
\begin{equation} \label{pb_serrin}
\begin{cases}
\Delta u = -1 & \textmd{ in } E \,, \\
u=0 & \textmd{ on } \partial E \,, \\
\partial_{\nu}u = -c & \textmd{ on } \partial E \,,
\end{cases}
\end{equation} 
for some constant $c>0$, if and only if $E=B_R(x_0)$ is a ball of radius $R$ centered at some point $x_0$. Moreover, the solution $u$ is radial and it is given by 
\begin{equation} \label{u_radial}
u(x) = \frac{R^2 - |x-x_0|^2}{2N}\,,
\end{equation}
with $R=Nc$. Here, $\nu$ denotes the outward normal to $\partial \Omega$.

The starting observation of this manuscript is the following. 
Let $\Sigma$ be an open cone in $\mathbb{R}^N$ with vertex at the origin $O$, i.e.
\begin{equation*}
\Sigma=\lbrace tx \, : \, x\in\omega, \, t\in(0,+\infty)\rbrace
\end{equation*}
for some open domain $\omega\subset \mathbb{S}^{N-1}$. We notice that if $x_0$ is chosen appropriately then $u$ given by \eqref{u_radial} is still the solution to 
\begin{equation*} 
\begin{cases}
\Delta u = -1 & \textmd{ in } B_R(x_0) \cap \Sigma \,, \\
u=0 \text{ and } \partial_\nu u = -c & \textmd{ on } \partial B_R(x_0) \setminus \overline{\Sigma}\,, \\
\partial_{\nu}u = 0 & \textmd{ on } B_R(x_0) \cap \partial \Sigma \,.
\end{cases}
\end{equation*} 
More precisely, $x_0$ may coincide with $O$ or it may be just a point of $\partial \Sigma \setminus \{O\}$ and, in this case, $B_R(x_0) \cap \Sigma$ is half a sphere lying over a flat portion of $\partial \Sigma$. Hence, it is natural to look for a characterization of symmetry in this direction, as done in \cite{Pacella-Tralli} (see below for a more detailed description).

In order to properly describe the results, we introduce some notation. Given an open cone $\Sigma$ such that $\partial\Sigma\setminus\lbrace O\rbrace$ is smooth, we consider a bounded domain $\Omega\subset\Sigma$ and denote by $\Gamma_0$ its relative boundary, i.e. 
$$
\Gamma_0 = \partial \Omega \cap \Sigma \,,
$$
and we set
$$
\Gamma_1=\partial\Omega\setminus\bar{\Gamma}_0 \,.
$$ 
We assume that $\mathcal{H}_{N-1}(\Gamma_1)>0$, $\mathcal{H}_{N-1}(\Gamma_0)>0$ and that $\Gamma_0$ is a smooth $(N-1)$-dimensional manifold, while $\partial\Gamma_0=\partial\Gamma_1 \subset\partial\Omega\setminus\lbrace O\rbrace$ is a smooth $(N-2)$-dimensional manifold. Following \cite{Pacella-Tralli}, such a domain $\Omega$ is called a \emph{sector-like domain}. In the following, we shall write $\nu=\nu_x$ to denote the exterior unit normal to $\partial\Omega$ wherever is defined (that is for $x\in\Gamma_0\cup\Gamma_1\setminus\lbrace O\rbrace$). 


Under the assumption that $\Sigma$ is a convex cone, in \cite{Pacella-Tralli} it is proved that if $\Omega$ is a sector-like domain and there exists a classical solution $u \in C^2(\Omega) \cap C^1(\Gamma_0 \cup \Gamma_1 \setminus \{O\})$ to 
\begin{equation} \label{pb_serrin_cone_PT}
\begin{cases}
\Delta u = -1 & \textmd{ in } \Omega \,, \\
u=0 \textmd{ and } \partial_\nu u = -c & \textmd{ on } \Gamma_0\,, \\
\partial_{\nu}u = 0 & \textmd{ on }  \Gamma_1 \setminus \{O\} \,,
\end{cases}
\end{equation} 
and such that 
$$
u \in W^{1,\infty}(\Omega) \cap W^{2,2}(\Omega) \,,
$$
then 
$$
\Omega = B_R(x_0) \cap \Sigma
$$ 
for some $x_0 \in \mathbb{R}^N$ and $u$ is given by \eqref{u_radial}. Differently from the original paper of Serrin \cite{Serrin}, the method of moving planes is not helpful (at least when applied in a standard way) and the rigidity result in \cite{Pacella-Tralli} is proved by using two alternative approaches. One is based on integral identities and it is inspired from \cite{BNST}, the other one uses a $P$-function approach as in \cite{Weinberger}. 


%
%

In this paper, we generalize the rigidity result for Serrin's problem in \cite{Pacella-Tralli} in two directions. The former is by considering more general operators than the Laplacian in the Euclidean space, where the operators may be of degenerate type. Here, the generalization is not trivial due to the lack of regularity of the solution (the operator may be degenerate) as well as to other technical details which are not present in the linear case.  

The latter is by considering an analogous problem in space forms, i.e. the hyperbolic space and the (hemi)sphere. The operator that we consider is linear and it is interesting since it has been shown that it is a helpful generalization of the torsion problem to space forms (\cite{CV1}, \cite{QuiXia}, \cite{QX}).

\medskip

\noindent {\bf More general operators in the Euclidean space.} 
Let $\Omega$ be a sector like domain in $\mathbb{R}^N$ and let $f:[0,+\infty) \to [0,+\infty)$ be such that 
\begin{equation} \label{f_HP}
\begin{aligned}
f \in C([0,\infty))\cap C^3(0,\infty) & \textmd{ with } f(0)=f'(0)=0, \ f''(s)>0 \text{ for } s>0 \\
& \text{ and } \lim_{s\rightarrow +\infty }\dfrac{f(s)}{s}=+ \infty \,.
\end{aligned}
\end{equation}
We consider the following mixed boundary value problem
\begin{equation}\label{pb cono}
\begin{cases}
L_f u=-1 &\mbox{in } \Omega, \\ u=0 &\mbox{on } \Gamma_0 \\ \partial_{\nu}u=0 \, &\mbox{on } \Gamma_1\setminus\lbrace O\rbrace,
\end{cases}
\end{equation}
where the operator $L_f$ is given by 
\begin{equation}\label{operator}
L_f u=\dive\left(f'(|\nabla u|)\dfrac{\nabla u}{|\nabla u|}\right),
\end{equation}
and the equation $L_f u = -1$ is understood in the sense of distributions
\begin{equation*}
\int_\Omega\dfrac{f'(|\nabla u|)}{|\nabla u|}\nabla u\cdot\nabla\varphi\, dx=\int_{\Omega}\varphi\, dx
\end{equation*}
for any 
\begin{equation*}
\varphi\in T(\Omega):=\lbrace \varphi\in C^1(\Omega) \, : \, \varphi\equiv 0 \, \textit{ on } \Gamma_0\rbrace.
\end{equation*}
Notice that the operator $L_f$ may be of degenerate type.

We notice that the solution to $L_f u =-1$ in $B_R(x_0)$ (a ball of radius $R$ centered at $x_0$) such that $u= 0$ on $\partial B_R(x_0)$ is radial and it is given by
\begin{equation} \label{u_radial_Lf}
u(x)=\int_{|x-x_0|}^{R}g'\left(\dfrac{s}{N}\right)\, ds \,,
\end{equation}
where $g$ denotes the Fenchel conjugate of $f$ (see for instance \cite{Crasta} or \cite{FGK}), i.e.
$$
g=\sup\lbrace st-f(s) \, : \, s\geq 0\rbrace
$$
(hence for us $g'$ is the inverse function of $f'$).
Our first main result is the following. 

\begin{theorem}\label{teo 1 cono}
Let $f$ satisfy \eqref{f_HP}. Let $\Sigma$ be a convex cone such that $\Sigma\setminus\lbrace O\rbrace$ is smooth and let $\Omega$ be a sector-like domain in $\Sigma$.  If there exists a solution $u\in C^1(\Omega\cup\Gamma_0\cup\Gamma_1\setminus\lbrace O\rbrace)\cap W^{1,\infty}(\Omega)$ to \eqref{pb cono} such that
\begin{equation}\label{overdetermined cond}
\partial_{\nu}u=-c \, \textit{ on } \, \Gamma_0
\end{equation}
for some constant $c$, and satisfying
\begin{equation}\label{key property}
\dfrac{f'(|\nabla u|)}{|\nabla u|}\nabla u \in W^{1,2}(\Omega,\mathbb{R}^N) \,,
\end{equation}
then there exists $x_0 \in \mathbb{R}^N$ such that  $\Omega=\Sigma\cap B_{R}(x_0)$ with $c=g'(|\Omega|/|\Gamma_0|)$, $R=N|\Omega|/|\Gamma_0|$. Moreover $u$ is given by \eqref{u_radial_Lf}, where $x_0$ is the origin or, if $\partial \Sigma$ contains flat regions, it is a point on $\partial \Sigma$.
\end{theorem}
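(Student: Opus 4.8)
The strategy follows the $P$-function / integral-identity approach of Weinberger and of \cite{BNST}, adapted to the degenerate operator $L_f$ and to the mixed boundary conditions on a sector-like domain. The plan is as follows. First, establish the basic integral identities: testing the equation $L_f u = -1$ against $u$ itself (legitimate since $u=0$ on $\Gamma_0$ and $\partial_\nu u = 0$ on $\Gamma_1$, so the boundary terms vanish) gives $\int_\Omega f'(|\nabla u|)|\nabla u|\,dx = \int_\Omega u\,dx$. Next, use the Pohozaev-type identity: test against $x\cdot\nabla u$. This requires care because of the vertex $O$ and the corner $\partial\Gamma_0$; one integrates over $\Omega\setminus B_\varepsilon(O)$ and lets $\varepsilon\to 0$, using $u\in W^{1,\infty}$ and the convexity of $\Sigma$ (which makes the boundary integrand over $\Gamma_1$ have a favorable sign, because $x\cdot\nu = 0$ there — the cone is invariant under dilations — and more importantly $x$ is tangent to $\partial\Sigma$). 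The combination of these identities, together with the overdetermined condition $\partial_\nu u = -c$ on $\Gamma_0$, should force $c = g'(|\Omega|/|\Gamma_0|)$ and pin down the value of $\int_\Omega u$.

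Second, introduce the $P$-function $P = f'(|\nabla u|)|\nabla u| - f(|\nabla u|) + \tfrac{1}{N}u$ (or the appropriate analogue so that on the model solution \eqref{u_radial_Lf} it is constant), and show that $P$ is subharmonic with respect to the linearized operator, or that a suitable integral of $|\nabla P|^2$ or of a Cauchy–Schwarz-deficit quantity vanishes. Concretely, I would compute $\int_\Omega (\text{divergence of a good vector field})$ and manipulate it into the form $\int_\Omega (\text{nonnegative density}) \le 0$, where the density measures the pointwise gap in a matrix Cauchy–Schwarz inequality, i.e. it vanishes exactly when $\nabla^2 u$ is a multiple of the identity (equivalently $u$ is "radial"). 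The boundary terms in this computation must be controlled: on $\Gamma_0$ one uses $u=0$ and $|\nabla u| = c$ constant, which makes $\Gamma_0$ a level set and relates the tangential derivatives of $P$ and the mean curvature; on $\Gamma_1$ one uses $\partial_\nu u = 0$ together with the convexity of the cone (the second fundamental form of $\partial\Sigma$ is $\ge 0$) to get a sign. This is the standard mechanism by which convexity of $\Sigma$ enters.

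Third, once the deficit vanishes, conclude that $\nabla^2 u = \lambda I$ on $\Omega$ for a function $\lambda$, hence (by a standard argument) $\lambda$ is constant and $u(x) = a|x-x_0|^2 + \text{const}$ on each connected component, for some center $x_0$; matching with the equation and boundary data forces $u$ to be exactly \eqref{u_radial_Lf} and $\Omega = \Sigma\cap B_R(x_0)$. The location of $x_0$ — either the vertex $O$, or a point of $\partial\Sigma$ lying in a flat part — comes from requiring that $B_R(x_0)\cap\Sigma$ be a sector-like domain with $\partial_\nu u = 0$ on $B_R(x_0)\cap\partial\Sigma$: this normal condition holds on the sphere only where the sphere meets $\partial\Sigma$ orthogonally, which (given the cone structure) happens precisely when $x_0=O$ or when $\partial\Sigma$ is flat near the relevant region and $x_0$ sits on it.

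The main obstacle I expect is the low regularity: $L_f$ is degenerate, so $u$ need not be $C^2$, and the $P$-function and the integration-by-parts manipulations are a priori only justified weakly. The hypothesis \eqref{key property}, that the vector field $f'(|\nabla u|)\nabla u/|\nabla u|\in W^{1,2}$, is exactly what is needed to make the second-order integral identities rigorous; the work will be in approximating $u$ (e.g. regularizing $f$ to a uniformly elliptic $f_\varepsilon$ and passing to the limit, or using difference quotients) and in handling the set $\{\nabla u = 0\}$, where the density in the Cauchy–Schwarz deficit must be interpreted carefully. A secondary technical point is the behavior near the vertex $O$ and near the edge $\partial\Gamma_0$, where the domain is only Lipschitz; here one excises small neighborhoods and shows the excised boundary integrals vanish in the limit using $u\in W^{1,\infty}$ and $\mathcal{H}_{N-1}$-smallness.
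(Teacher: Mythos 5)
Your overall architecture --- testing the equation with $u$, a Pohozaev-type identity obtained by testing with $x\cdot\nabla u$ away from $O$ and $\partial\Gamma_0$, and an integral Cauchy--Schwarz-type deficit whose boundary contribution on $\Gamma_1$ is signed by the convexity of the cone --- is the same as the paper's. However, your Step 3 contains a genuine error that would make the argument fail for general $f$: the matrix whose proportionality to the identity can be extracted from the vanishing deficit is not the Hessian $\nabla^2 u$, but $W=\nabla^2_\xi V(\nabla u)\,\nabla^2 u$ with $V(\xi)=f(|\xi|)$, i.e.\ the Jacobian of the stress field $x\mapsto \frac{f'(|\nabla u|)}{|\nabla u|}\nabla u$ (see \eqref{Wdef}--\eqref{Wdef_II}). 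For the radial profile \eqref{u_radial_Lf} the Hessian of $u$ has radial eigenvalue $-\frac1N g''\bigl(\tfrac{|x-x_0|}{N}\bigr)$ and tangential eigenvalues $-g'\bigl(\tfrac{|x-x_0|}{N}\bigr)/|x-x_0|$, which coincide only when $f(t)=t^2/2$; hence your conclusion ``$\nabla^2u=\lambda I$, $\lambda$ constant, $u=a|x-x_0|^2+\mathrm{const}$'' is incompatible with $L_fu=-1$ and with the asserted solution \eqref{u_radial_Lf}, and no matching with the boundary data can repair a quadratic ansatz. The correct rigidity step is: equality in a Newton-type inequality $S_2(W)\le\frac{N-1}{2N}\Tr(W)^2$, which must be invoked in the generalized form for products $W=BC$ with $B$ symmetric positive semidefinite (Lemma \ref{lemma matrici generiche}, from \cite{CS}) because $W$ is not symmetric, gives $W=-\frac1N\,I$ a.e., hence $\frac{f'(|\nabla u|)}{|\nabla u|}\nabla u=-\frac1N(x-x_0)$, and inverting $f'$ (using $g'=(f')^{-1}$) yields \eqref{u_radial_Lf}. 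Your candidate $P$-function $g(f'(|\nabla u|))+u/N$ is indeed the right generalization of Weinberger's, but establishing its subharmonicity across the degenerate set $\{\nabla u=0\}$ is precisely the difficulty the paper sidesteps by working with integral identities for $S_2(W)$ under the hypothesis \eqref{key property}.

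Two smaller inaccuracies: in the Pohozaev identity no convexity is needed on $\Gamma_1$, since $x\cdot\nu=0$ and $\partial_\nu u=0$ make those boundary terms vanish identically (convexity enters only in the deficit inequality, through the second fundamental form of the level sets parallel to $\Gamma_1$); and $c=g'(|\Omega|/|\Gamma_0|)$ follows directly from integrating $L_fu=-1$ over $\Omega$, not from the Pohozaev identity. Finally, to locate $x_0$ you need the equality information coming from the boundary term of the deficit identity, namely $\mathrm{II}(\nabla^T u,\nabla^T u)=0$ on $\Gamma_1$: your orthogonality discussion is the right idea, but without recording this equality case it is not justified that $x_0$ must be the vertex or a point of a flat portion of $\partial\Sigma$.
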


When $L_f = \Delta$ (i.e. $f(t)=t^2/2$),  Theorem \ref{teo 1 cono} is essentially Theorem 1.1 in \cite{Pacella-Tralli}.  Condition  \eqref{key property} holds (at least locally in $\Omega$) for  uniformly elliptic operators, such as the mean curvature operator ($f(t) = \sqrt{1+t^2}$), and also for degenerate operators such as the $p-$Laplace operator ($f(t)=t^p/p$), see \cite{Mingione} and \cite{CiMa}. We stress that the validity of \eqref{key property} up to the boundary is more subtle, since it depends strongly on how $\Gamma_0$ and $\Gamma_1$ intersect. 

We observe that the overdetermined problem \eqref{pb cono} with the condition \eqref{overdetermined cond} can be seen as a partially overdetermined problem (see for instance \cite{FV1} and \cite{FV2}), since we impose both Dirichlet and Neumann conditions only on a part of the boundary, namely $\Gamma_0$, while a sole homogeneous Neumann boundary condition is assigned on $\Gamma_1$ (where, however, there is the strong assumption that it is contained in the boundary of a cone).

We notice that the proof of Theorem \ref{teo 1 cono} still works when $\Gamma_1 = \emptyset$ (hence $\partial\Omega=\Gamma_0$). In this case we obtain the celebrated result of Serrin \cite{Serrin} for the operator $L_f$ (see also \cite{BC}, \cite{BNST}, \cite{CMS}, \cite{FGK}, \cite{FarinaKawhol}, \cite{Garofalo-Lewis}, \cite{Roncoroni}, \cite{Weinberger}).
Moreover, the proof is also suitable to be adapted to the anisotropic counterpart of the overdetermined problem \eqref{pb cono} and \eqref{overdetermined cond} by following the approach used in this manuscript and in \cite{BC} (see also \cite{CS} and \cite{WX}). 
We also mention that rigidity theorems in cones are related to the study of relative isoperimetric and Sobolev inequalities in cones, and we refer to \cite{Pacella-Tralli} for a more detailed discussion (see also \cite{Baer-Figalli,Cabre-RosOton-Serra,Figalli-Indrei,Grossi-Pacella,Lions-Pacella-Tricarico,Lions-Pacella}).

\medskip

\noindent {\bf Serrin's problem in cones in space forms.} A space form is a complete simply-connected Riemannian manifold $(M,g)$ with constant sectional curvature $K$. Up to homotheties we may assume $K=0$,$1$,$-1$: the case $K=0$ corresponds to the Euclidean space $\mathbb{R}^N$, $K=-1$ is the hyperbolic space $\mathbb{H}^N$ and $K=1$ is the unitary sphere with the round metric $\mathbb{S}^N$. More precisely, in the case $K=1$ we consider the hemisphere $\mathbb{S}^N_+$. These three models can be described as warped product spaces $M=I\times \mathbb S^{N-1}$ equipped with the rotationally symmetric metric 
$$
g=dr^2+h(r)^2\,g_{\mathbb S^{N-1}},
$$  
where $g_{\mathbb S^{N-1}}$ is the round metric on the $(N-1)$-dimensional sphere $\mathbb S^{N-1}$ and 
\begin{enumerate}
	\item[-] $h(r)=r$ in the Euclidean case ($K=0$), with $I=[0,\infty)$;

	\item[-] $h(r)=\sinh(r)$ in the hyperbolic case ($K=-1$), with $I=[0,\infty)$;
	
	\item[-] $h(r)=\sin(r)$ in the spherical case ($K=1$), with $I=[0,\pi/2)$ for $\mathbb{S}^N_+$. 
\end{enumerate}

By using the warping structure of the manifold, we denote by $O$ the pole of the model and it is natural to define a \emph{cone $\Sigma$ with vertex at $\{O\}$} as the set
\begin{equation*}
\Sigma=\lbrace tx \, : \, x\in\omega, \, t\in I \rbrace
\end{equation*}
for some open domain $\omega\subset \mathbb{S}^{N-1}$. Moreover, we say that $\Sigma$ is a \emph{convex cone} if the second fundamental form $\mathrm {II}$ is nonnegative defined at every $p \in \partial \Sigma$. 

Serrin's overdetermined problem for semilinear equations $\Delta u + f(u) =0$ in space forms has been studied in \cite{Kumaresan_Prajapat} and \cite{Mol} by using the method of moving planes. 
If one considers the corresponding problem for sector-like domains in space forms, the method of moving planes can not be used and one has to look for alternative approaches. As already mentioned, in the Euclidean space these approaches typically use integral identities and $P$-functions (see \cite{BNST,Weinberger}) and have the common feature that at a crucial step of the proof they use the fact that the radial solution attains the equality sign in a Cauchy-Schwartz inequality, which implies that the Hessian matrix $\Hess u $ is proportional to the identity. Since the equivalent crucial step in space forms is to prove that the Hessian matrix of the solution is proportional to the metric, then the equation $\Delta u =-1$ is no more suitable (one can easily verify that in the radial case the Hessian matrix of the solution is not proportional to the metric) and a suitable equation to be considered is 
\begin{equation} \label{eq_space_forms}
\Delta u+NKu=-1 
\end{equation}
as done in \cite{CV1} and \cite{QuiXia}, \cite{QX}. It is clear that for $K=0$, i.e. in the Euclidean case, the equation reduces to $\Delta u = -1$.
For this reason, we believe that, in this setting, \eqref{eq_space_forms} is the natural generalization of the Euclidean $\Delta u=-1$ to space forms.

A Serrin's type rigidity result for \eqref{eq_space_forms} can be proved following Weinberger's approach by using a suitable $P$-function associated to \eqref{eq_space_forms} (see \cite{CV1} and \cite{QX}). This approach is helpful for proving the following Serrin's type rigidity result for convex cones in space forms, which is the second main result of this paper. 

\begin{theorem}\label{teo 2 cono}
Let $(M,g)$ be the Euclidean space, hyperbolic space or the hemisphere.	Let $\Sigma \subset M$ be a convex cone such that $\Sigma\setminus\lbrace O\rbrace$ is smooth and let $\Omega$ be a sector-like domain in $\Sigma$.  Assume that there exists a solution $u\in C^1(\Omega\cup\Gamma_0\cup\Gamma_1\setminus\lbrace O\rbrace)\cap W^{1,\infty}(\Omega) \cap W^{2,2}(\Omega)$ to
\begin{equation}\label{pb cono sf}
	\begin{cases}
	\Delta u+NKu=-1 &\mbox{in } \Omega, \\ u=0 &\mbox{on } \Gamma_0 \\ \partial_{\nu}u=0 \, &\mbox{on } \Gamma_1\setminus\lbrace O\rbrace,
	\end{cases}
	\end{equation}
	 such that
	\begin{equation}\label{overdetermined cond spaceforms}
	\partial_{\nu}u=-c \, \textit{ on } \, \Gamma_0
	\end{equation}	
	for some constant $c$.	Then $\Omega=\Sigma\cap B_{R}(x_0)$ where $B_R(x_0)$ is a geodesic ball of radius $R$ centered at $x_0$ and $u$ is given by
$$
u(x)=\frac{H(R)-H(d(x,x_0))}{n \dot h(R)} \,,
$$
with 
$$
H(r)=\int_0^r h(s) ds 
$$ 
and where $d(x,x_0)$ denotes the distance between $x$ and $x_0$.
\end{theorem}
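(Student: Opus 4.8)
The plan is to adapt Weinberger's $P$-function scheme to \eqref{pb cono sf}, exploiting the warped-product geometry $g = dr^2 + h(r)^2 g_{\mathbb{S}^{N-1}}$ and the fact that cones are ruled by geodesics through $O$. Since $\Delta u + NKu = -1$ is uniformly elliptic with smooth coefficients, interior elliptic regularity gives $u \in C^\infty(\Omega)$, so the pointwise identities below hold in $\Omega$; the hypotheses $W^{1,\infty}(\Omega)\cap W^{2,2}(\Omega)$ and $C^1$ up to $\Gamma_0\cup\Gamma_1\setminus\{O\}$ are needed only to integrate by parts up to $\partial\Omega$. First I would introduce
$$
P := |\nabla u|^2 + \frac{2}{N}\,u + K u^2 ,
$$
and, combining the Bochner formula with $\ric = (N-1)K\,g$ and the consequence $\nabla u\cdot\nabla(\Delta u) = -NK|\nabla u|^2$ of the equation, I would show
$$
\Delta P \;=\; 2\Big(|\Hess u|^2 - \tfrac{1}{N}(\Delta u)^2\Big) \;\ge\; 0 \qquad\text{in }\Omega ,
$$
the inequality being Newton's (Cauchy--Schwarz) inequality, with equality at a point precisely when $\Hess u = \frac{\Delta u}{N}\,g$ there. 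This is the $P$-function attached to \eqref{eq_space_forms} as in \cite{CV1,QX}.

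Next I would record the behaviour of $P$ on $\partial\Omega$. On $\Gamma_0$ the conditions $u = 0$ and $\pn u = -c$ give $|\nabla u| = c$, so $P\equiv c^2$ on $\Gamma_0$. On $\Gamma_1$ the condition $\pn u = 0$ forces $\nabla u$ to be tangent to $\partial\Sigma$; differentiating $\langle\nabla u,\nu\rangle = 0$ along $\Gamma_1$ in the direction $\nabla u$ yields $\Hess u(\nabla u,\nu) = -\mathrm{II}(\nabla u,\nabla u)$, whence $\pn P = 2\,\Hess u(\nabla u,\nu) = -2\,\mathrm{II}(\nabla u,\nabla u) \le 0$ by convexity of $\Sigma$. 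The strong maximum principle together with the Hopf boundary lemma applied on $\Gamma_1$ then gives $P \le c^2$ throughout $\Omega$.

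The core of the argument is a Pohozaev--Rellich identity built from the conformal field $X := \nabla H$, where $H(r) = \int_0^r h(s)\,ds$ and $r = \dist(\cdot, O)$. Since $\Hess H = \dot h\, g$ one has $\nabla_Y X = \dot h\,Y$ for all $Y$ and $\dive X = N\dot h$; moreover $X = h(r)\,\pr$ is tangent to each geodesic ray through $O$, so $\langle X,\nu\rangle = 0$ on $\Gamma_1\subset\partial\Sigma$ --- this is the structural feature of cones that kills the boundary terms on $\Gamma_1$. Testing the equation against $\dot h\, u$ and integrating by parts (all boundary terms vanish, since $u = 0$ on $\Gamma_0$ and $\pn u = 0$ on $\Gamma_1$) I would obtain
$$
\int_\Omega \dot h\,|\nabla u|^2\,dV \;=\; \int_\Omega \dot h\,u\,dV + \frac{NK}{2}\int_\Omega \dot h\,u^2\,dV ;
$$
testing instead against $X(u) = \langle X,\nabla u\rangle$ and integrating by parts via $\nabla_Y X = \dot h\,Y$ --- with the $\Gamma_1$ contributions again vanishing because $\pn u = 0$ and $\langle X,\nu\rangle = 0$, and with $u = 0$, $\nabla u = -c\,\nu$ and $\int_{\Gamma_0}\langle X,\nu\rangle\,d\mathcal{H}_{N-1} = N\int_\Omega\dot h\,dV$ (divergence theorem) on $\Gamma_0$ --- I would get, after inserting the previous identity,
$$
(N+2)\int_\Omega \dot h\,|\nabla u|^2\,dV \;=\; N c^2\int_\Omega \dot h\,dV .
$$
These two identities combine to give exactly $\int_\Omega (c^2 - P)\,\dot h\,dV = 0$. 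Since $\dot h = 1 - K H(r) > 0$ on $\Omega$ in each of the three geometries and $c^2 - P\ge 0$ by the previous step, it follows that $P\equiv c^2$, hence $\Delta P\equiv 0$, hence $\Hess u = \frac{\Delta u}{N}\,g = -\big(\tfrac1N + Ku\big)g$ everywhere. A concircular identity of this Obata--Tashiro type forces $u$ to be radial about a point $x_0$, its level sets to be geodesic spheres and $\Gamma_0 = \{u=0\}$ to be a geodesic sphere; since $\Gamma_1\subset\partial\Sigma$ this gives $\Omega = B_R(x_0)\cap\Sigma$, and integrating the resulting radial ODE $\phi'' = \phi'\,\dot h/h$ together with $u = 0$ on $\Gamma_0$ and the equation (to fix the constant) yields the stated formula for $u$. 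The center $x_0$ is either $O$, or --- when $\partial\Sigma$ contains a totally geodesic portion --- a point of such a portion, so that the radial field $\nabla u$ stays tangent to $\Gamma_1$.

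\textbf{Main obstacle.} Once the right objects are in place, the computational heart --- the Bochner identity for $P$ and the two integral identities generated by $X = \nabla H$ --- is essentially bookkeeping. The real difficulty is that $\Omega$ is only sector-like: it carries the edge $\partial\Gamma_0 = \partial\Gamma_1$ and the vertex $O$, and $u$ is regular only in the $W^{1,\infty}\cap W^{2,2}$ sense up to $\partial\Omega$. Justifying the integrations by parts up to $\Gamma_0$ and $\Gamma_1$, the divergence-theorem evaluation of $\int_{\Gamma_0}\langle X,\nu\rangle$, and --- most delicately --- the maximum-principle/Hopf step in a neighbourhood of the singular set $\{O\}\cup\partial\Gamma_0$, calls for an approximation/cut-off argument near that set in the spirit of \cite{Pacella-Tralli}. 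That, rather than the algebra, is where the work lies.
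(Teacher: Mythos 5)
Your proposal is correct and follows essentially the same route as the paper's own proof: the same $P$-function $|\nabla u|^2+\frac{2}{N}u+Ku^2$, the same convexity argument giving $\partial_\nu P\le 0$ on $\Gamma_1$, the same $\dot h$-weighted combination of the subharmonicity bound with a Pohozaev--Rellich identity for $X=h\partial_r$ (the paper quotes formula (2.8) of \cite{CV1} where you derive it directly), and the same Obata-type rigidity step. The only real deviation is that you obtain $P\le c^2$ via the strong maximum principle and Hopf's lemma, whereas the paper integrates $\Delta P\ge 0$ against $(P-c^2)^+$, which sidesteps the delicate behaviour at the edge $\partial\Gamma_0=\partial\Gamma_1$ and at $O$ that you correctly flag as the technical crux.
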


\medskip

\medskip

\noindent {\bf Organization of the paper.} The paper is organized as follows: in Section \ref{section_preliminary} we introduce some notation, we recall some basic facts about elementary symmetric function of a matrix and prove some preliminary result needed to prove Theorem \ref{teo 1 cono}. Theorems \ref{teo 1 cono} and \ref{teo 2 cono} are proved in Sections  \ref{section_proofthm1} and \ref{section_spaceforms}, respectively.

\section{Preliminary results for Theorem \ref{teo 1 cono}} \label{section_preliminary}
In this section we collect some preliminary results which are needed in the proof of Theorem \ref{teo 1 cono}. Let $f$ satisfy \eqref{f_HP} and consider problem \eqref{pb cono}
\begin{equation*}
\begin{cases}
L_f u=-1 &\mbox{in } \Omega, \\ u=0 &\mbox{on } \Gamma_0 \\ \partial_{\nu}u=0 \, &\mbox{on } \Gamma_1\setminus\lbrace O\rbrace,
\end{cases}
\end{equation*}
where the operator $L_f$ is given by 
\begin{equation*} 
L_f u=\dive\left(f'(|\nabla u|)\dfrac{\nabla u}{|\nabla u|}\right) \,.
\end{equation*}
 
\begin{definition}
$u\in C^1(\Omega\cup\Gamma_0\cup\Gamma_1\setminus\lbrace O\rbrace)$ is a 
solution to Problem \eqref{pb cono} if
\begin{equation}\label{weak sol cono}
\int_\Omega\dfrac{f'(|\nabla u|)}{|\nabla u|}\nabla u\cdot\nabla\varphi\, dx=\int_{\Omega}\varphi\, dx
\end{equation}
for any 
\begin{equation}
\varphi\in T(\Omega):=\lbrace \varphi\in C^1(\Omega) \, : \, \varphi\equiv 0 \, \textit{ on } \Gamma_0\rbrace.
\end{equation}
\end{definition}
We observe some facts that will be useful in the following. Since the outward normal $\nu$ to $\Gamma_0$ is given by 
	\begin{equation}\label{normale}
			\nu=-\frac{\nabla u}{|\nabla u|}|_{\Gamma_0} \,,
	\end{equation}
	then \eqref{overdetermined cond} implies that
	\begin{equation}\label{gradiente cost}
	|\nabla u|=c \quad \text{ on } \quad \Gamma_0.
	\end{equation}
Moreover we observe that the constant $c$ in the statement is given by
\begin{equation}\label{valore_c}
c=g'\left(\frac{|\Omega|}{|\Gamma_0|}\right),
\end{equation}
as it follows by integrating the equation $L_fu=-1$, by using the divergence theorem, formula \eqref{gradiente cost} and the fact that $\partial_{\nu}u=0$ on $\Gamma_1\setminus\{ O\}$.
We also notice that 
\begin{equation}
x\cdot\nu=0 \quad \text{ on } \quad \Gamma_1.
\end{equation}

It will be useful to write the operator $L_f$ as the trace of a matrix. Let $V:\rr^N\rightarrow\rr$ be given by
\begin{equation}\label{definizione di V}
V(\xi)=f(|\xi|) \quad \text{for } \xi \in \rr^N,
\end{equation}
and notice that 
\begin{equation}\label{derivatediV}
V_{\xi_i}(\xi)=f'(|\xi|)\dfrac{\xi_i}{|\xi|} \quad \text{ and } \quad V_{\xi_i\xi_j}(\xi)=f''(|\xi|)\dfrac{\xi_i\xi_j}{|\xi|^2}-f'(|\xi|)\dfrac{\xi_i\xi_j}{|\xi|^3}+f'(|\xi|)\dfrac{\delta_{ij}}{|\xi|}.
\end{equation}
Hence, by setting 
$$
W=(w_{ij})_{i,j=1,\dots,N}
$$ 
where 
\begin{equation} \label{Wdef}
w_{ij}(x)=\partial_{j} V_{\xi_i}(\nabla u(x)) \,,
\end{equation} 
we have 
\begin{equation} \label{Lfu_trW}
L_f(u) = \Tr(W).
\end{equation}
Notice that at regular points, where $\nabla u \neq 0$, it holds that 
\begin{equation} \label{Wdef_II}
W=\nabla^2_{\xi}V(\nabla u)\nabla^2 u \,.
\end{equation}
Our approach to prove Theorem \ref{teo 1 cono} is to write several integral identities and just one pointwise inequality, involving the matrix $W$. Writing the operator $L_f$ as trace of $W$ has the advantage that we can use the generalization of the so-called Newton's inequalities, as explained in the following subsection.

\subsection{Elementary symmetric functions of a matrix}
Given a matrix $A=(a_{ij})\in\rr^{N\times N}$, for any $k=1,\dots,N$ we denote by $S_k(A)$ the sum of all the principal minors of $A$ of order $k$. In particular, $S_1(A)=\Tr(A)$ is the trace of $A$, and $S_n(A)=\mathrm{det}(A)$ is the determinant of $A$. We consider the case $k=2$. By setting
\begin{equation}\label{def_S^2}
S^2_{ij}(A)=-a_{ji}+\delta_{ij}\Tr(A),
\end{equation}
we can write
\begin{equation}\label{defS_2}
S_2(A)=\frac{1}{2}\sum_{i,j}S^2_{ij}(A)a_{ij}=\frac{1}{2}((\Tr(A)^2-\Tr(A^2)) \,.
\end{equation}
The elementary symmetric functions of a symmetric matrix $A$ satisfy the so called Newton's inequalities. In particular, $S_1$ and $S_2$ are related by
\begin{equation}\label{CS per S_2}
S_2(A)\leq \frac{N-1}{2N}(S_1(A))^2 \,.
\end{equation}
When the matrix $A=W$, with  $W$ given by \eqref{Wdef_II}, we have
\begin{equation}\label{defS_ij}
S^2_{ij}(W)=-V_{\xi_j\xi_k}(\nabla u)u_{ki}+\delta_{ij}L_f u \,,
\end{equation}
and $S_{ij}^2(W)$ is divergence free in the following (weak) sense
\begin{equation}\label{derivata di S nulla}
\dfrac{\partial}{\partial x_j}S_{ij}^2(W)=0 \,.
\end{equation}
We will need a generalization of \eqref{CS per S_2} to not necessarily symmetric matrices, which is given by the following lemma. 
\begin{lemma}[\cite{CS}, Lemma 3.2]\label{lemma matrici generiche}
Let $B$ and $C$ be symmetric matrices in $\rr^{N\times N}$, and let $B$ be positive semidefinite. Set $A=BC$. Then the following inequality holds:
\begin{equation}\label{matrices}
S_2(A)\leq\dfrac{N-1}{2N}\Tr(A)^2.
\end{equation}
Moreover, if $\Tr(A)\neq 0$ and equality holds in \eqref{matrices}, then 
\begin{equation*}
A=\dfrac{\Tr(A)}{N}I,
\end{equation*}
and $B$ is, in fact, positive definite.
\end{lemma}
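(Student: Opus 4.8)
The plan is to reduce the general (non-symmetric) case $A = BC$ to the symmetric situation where the classical Newton inequality \eqref{CS per S_2} applies. First I would treat the case where $B$ is positive \emph{definite}. Then $B$ admits a symmetric positive definite square root $B^{1/2}$, and the matrix $\widetilde A := B^{1/2} C B^{1/2}$ is symmetric. Since $A = BC = B^{1/2}(B^{1/2}CB^{1/2})B^{-1/2}$ is conjugate to $\widetilde A$, the two matrices have the same characteristic polynomial, hence $S_k(A) = S_k(\widetilde A)$ for all $k$; in particular $\Tr(A) = \Tr(\widetilde A)$ and $S_2(A) = S_2(\widetilde A)$. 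Applying \eqref{CS per S_2} to the symmetric matrix $\widetilde A$ gives $S_2(A) = S_2(\widetilde A) \le \frac{N-1}{2N}\Tr(\widetilde A)^2 = \frac{N-1}{2N}\Tr(A)^2$, which is \eqref{matrices}.

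For the equality case, again assuming $B$ positive definite: equality in \eqref{matrices} forces equality in the classical Newton inequality for the symmetric matrix $\widetilde A$. The equality case of \eqref{CS per S_2} for symmetric matrices (which is the Cauchy–Schwarz-type statement that the eigenvalues must all be equal, provided the trace is nonzero so the common eigenvalue is determined) gives $\widetilde A = \frac{\Tr(\widetilde A)}{N} I$. Conjugating back, $A = B^{1/2}\widetilde A B^{-1/2} = \frac{\Tr(A)}{N} I$, as claimed.

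It remains to handle $B$ merely positive \emph{semidefinite}. Here I would argue by approximation: replace $B$ by $B_\varepsilon := B + \varepsilon I$, which is positive definite, set $A_\varepsilon := B_\varepsilon C$, apply the definite case to get $S_2(A_\varepsilon) \le \frac{N-1}{2N}\Tr(A_\varepsilon)^2$, and let $\varepsilon \to 0^+$; since $S_2$ and $\Tr$ are continuous (polynomial) functions of the entries and $A_\varepsilon \to A$, the inequality \eqref{matrices} passes to the limit. For the equality statement when $\Tr(A) \neq 0$: if equality held in \eqref{matrices} but $B$ were singular, one checks that $A = BC$ would have a nontrivial kernel (any vector in $\ker B$ lies in $\ker A$), so $0$ is an eigenvalue of $A$; but equality in a Newton-type inequality with $\Tr(A)\neq 0$ should force all eigenvalues equal to $\Tr(A)/N \neq 0$, a contradiction. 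Hence $B$ must in fact be positive definite, and we are back in the already-treated case, which yields $A = \frac{\Tr(A)}{N} I$.

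The main obstacle I anticipate is making the equality analysis fully rigorous in the semidefinite case: one needs to argue carefully that $A = BC$ with $B$ symmetric positive semidefinite has only real, nonnegative-compatible spectral behavior (it is similar to the symmetric $B^{1/2}CB^{1/2}$ only when $B$ is invertible), and that the Newton equality genuinely excludes a zero eigenvalue when $\Tr(A) \ne 0$. A clean way around this is precisely the reduction above — first nail the positive definite case via the symmetric square root, then use the kernel argument to show equality with nonzero trace is incompatible with $\det B = 0$ — so that the delicate spectral bookkeeping is confined to the symmetric case, where \eqref{CS per S_2} and its equality condition are standard.
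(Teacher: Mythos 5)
The paper itself does not prove this lemma (it is quoted from \cite{CS}, Lemma 3.2), so your argument stands on its own. Your positive definite case is correct and standard: $A=BC$ is conjugate to the symmetric matrix $\widetilde A=B^{1/2}CB^{1/2}$, so \eqref{CS per S_2} applied to $\widetilde A$ gives \eqref{matrices}, and the equality case of \eqref{CS per S_2} forces $\widetilde A=\frac{\Tr(A)}{N}I$, hence $A=\frac{\Tr(A)}{N}I$. The approximation $B+\varepsilon I$ also yields the inequality \eqref{matrices} for semidefinite $B$.

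The equality case for singular $B$, however, has a genuine gap (one you partly flag yourself). First, the claim that every $v\in\ker B$ lies in $\ker A$ is false: for $B=\mathrm{diag}(0,1)$ and $C$ the matrix swapping $e_1,e_2$, one has $Ae_1=e_2\neq 0$. What is true is $\ker B\subseteq\ker A^{T}$, or more simply $\det A=\det B\det C=0$, so $0$ is still an eigenvalue of $A$ and this part is repairable. Second, and more seriously, to get the contradiction you invoke ``equality forces all eigenvalues equal to $\Tr(A)/N$''; but when $B$ is singular $A$ is not similar to a symmetric matrix by your conjugation, and for a real matrix with possibly complex spectrum the identity $S_2(A)=\frac{N-1}{2N}\Tr(A)^2$ does \emph{not} force equal eigenvalues (for $N\ge 3$ one can realize equality with a complex-conjugate pair and a distinct real eigenvalue). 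So the deduction needs the fact that the spectrum of $BC$ is real even for singular $B$, which your sketch does not establish. The clean fix, which also renders the $\varepsilon$-regularization unnecessary, is the observation that $XY$ and $YX$ always have the same characteristic polynomial: taking $X=B^{1/2}$, $Y=B^{1/2}C$ shows $A=BC$ and $\widetilde A=B^{1/2}CB^{1/2}$ are isospectral for \emph{every} positive semidefinite $B$, so $\Tr(A)=\Tr(\widetilde A)$ and $S_2(A)=S_2(\widetilde A)$. Then \eqref{matrices} follows at once from \eqref{CS per S_2} applied to the symmetric $\widetilde A$; and if equality holds with $\Tr(A)\neq 0$, then $\widetilde A=\frac{\Tr(A)}{N}I$ is invertible, hence $B^{1/2}$ and therefore $B$ are invertible, i.e. $B$ is positive definite, and your conjugation argument concludes $A=\frac{\Tr(A)}{N}I$.
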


\subsection{Some properties of solutions to \eqref{pb cono}} 
In this subsection we collect some properties of the solutions to \eqref{pb cono}. We assume that the solution is of class $C^1(\Omega) \cap W^{1,\infty}(\Omega)$. From standard regularity elliptic estimates one has that $u$ is of class $C^{2,\alpha}$ where $\nabla u \neq 0$. If one has more information about the degeneracy at zero of $f$ (see \cite{Mingione} and \cite{CiMa}), then one may conclude that $u \in C^{1,\alpha} ( \Omega)$ as well as that 
\begin{equation*} 
\frac{f'(|\nabla u|)}{|\nabla u|} \nabla u \in W^{1,2}_{loc}(\Omega, \mathbb{\rr}^N) \,.
\end{equation*}
The regularity up to the boundary is more difficult to be understood, and it strongly depends on how $\Gamma_0$ and $\Gamma_1$ intersect. This will be one of the major points of the proof of Theorem \ref{teo 1 cono}.

In the following two lemmas we show that $u>0$ in  $\Omega\cup\Gamma_1$ and we prove a Pohozaev-type identity.

\begin{lemma}\label{prop 2}
Let $f$ satisfy \eqref{f_HP} and let $u$ be a solution of \eqref{pb cono}. Then 
\begin{equation}
u>0 \quad \textit{ in } \quad \Omega\cup\Gamma_1.
\end{equation}
\end{lemma}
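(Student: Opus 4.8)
The plan is to prove positivity by a combination of the strong maximum principle in the open set $\Omega$ and the Hopf lemma on the Neumann portion $\Gamma_1$. First I would note that $u$ is a solution of the quasilinear equation $L_f u = -1 < 0$ in $\Omega$, so $u$ is a supersolution of the homogeneous operator $L_f$ in the weak sense; since the operator $L_f$ is of the form $\dive(A(\nabla u))$ with $A(\xi) = f'(|\xi|)\xi/|\xi|$ monotone (because $f'' > 0$), the weak comparison principle and the strong maximum principle of Vazquez apply on any subdomain where we have the requisite regularity. On $\Gamma_0$ we have $u = 0$, so by the weak minimum principle $u \geq 0$ in $\Omega$. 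The point is then to upgrade this to the strict inequality $u > 0$ in $\Omega \cup \Gamma_1$ and to rule out that $u$ vanishes somewhere in the interior or on the Neumann part of the boundary.

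The key steps, in order: (i) establish $u \geq 0$ in $\overline{\Omega}$ via weak comparison with the constant $0$, using that $\Delta_f 0 = 0 > -1 = L_f u$ and the boundary condition $u = 0$ on $\Gamma_0$ together with $\partial_\nu u = 0$ on $\Gamma_1$; here one tests the weak formulation \eqref{weak sol cono} with $\varphi = u^- = \max\{-u, 0\}$, which is an admissible test function since it vanishes on $\Gamma_0$, and uses monotonicity of $A$ to conclude $\nabla u^- \equiv 0$, hence $u^- \equiv 0$. (ii) Suppose for contradiction that $u(x_0) = 0$ for some $x_0 \in \Omega$. Since $u \geq 0$, $x_0$ is an interior minimum; applying the strong maximum principle for the operator $L_f$ (in the form valid for degenerate quasilinear operators, e.g. Vazquez, Pucci--Serrin, or Damascelli--Sciunzi, using that $L_f u \leq 0$) on a ball around $x_0$ forces $u \equiv 0$ on a neighborhood, hence $u \equiv 0$ on $\Omega$ by connectedness, contradicting $L_f u = -1$. (iii) Suppose instead $u(x_1) = 0$ for some $x_1 \in \Gamma_1 \setminus \{O\}$, where $\Gamma_1$ is smooth and (being contained in $\partial\Sigma$) satisfies an interior ball condition since $\Sigma$ is convex. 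Then $x_1$ is a boundary minimum with $u > 0$ nearby (by step (ii) interior positivity is already known, so $u > 0$ in $\Omega$), and the Hopf boundary-point lemma for $L_f$ gives $\partial_\nu u(x_1) < 0$ strictly; but the Neumann condition forces $\partial_\nu u(x_1) = 0$, a contradiction. Hence $u > 0$ throughout $\Omega \cup \Gamma_1$.

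The main obstacle I anticipate is \emph{the degeneracy of $L_f$}: the strong maximum principle and the Hopf lemma are delicate for operators that may degenerate where $\nabla u = 0$, so one must invoke the appropriate versions (Vazquez's strong maximum principle and its boundary-point refinements, or the Damascelli--Sciunzi Hopf-type lemmas) and check that the structural hypotheses \eqref{f_HP} — in particular $f'' > 0$ on $(0,\infty)$, $f(0) = f'(0) = 0$, and the superlinear growth — put $L_f$ in the admissible class. A second, more technical point is that $\Gamma_1$ meets $\Gamma_0$ along $\partial\Gamma_0$ and the vertex $O$ is excluded, so the Hopf argument at a point $x_1 \in \Gamma_1$ must be localized away from $\partial\Gamma_0 \cup \{O\}$; this is harmless since the claimed positivity is on $\Gamma_1 \setminus \{O\}$ and any such point has a neighborhood in $\overline{\Omega}$ not touching $\Gamma_0$. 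Convexity of $\Sigma$ is used precisely to guarantee the interior ball condition at points of $\Gamma_1$, which is what makes Hopf's lemma available there.
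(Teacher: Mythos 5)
Your first step is identical to the paper's: testing \eqref{weak sol cono} with $\varphi=u^-$ gives $u\ge 0$ (in fact the right-hand side already forces $\int_{\Omega\cap\{u<0\}}u^-\,dx=0$). For the strict positivity the paper argues in the same spirit but with a different implementation: if $u(x_0)=0$ at some $x_0\in\Omega\cup\Gamma_1$, then $\nabla u(x_0)=0$ (tangential derivatives vanish at a minimum point, the normal one by the Neumann condition), and a contradiction follows by comparing $u$ with the explicit radial solution \eqref{u_radial_Lf} of $L_f v=-1$, $v=0$ on $\partial B$, in a small ball $B\subset\Omega$ (interior case) or internally tangent at $x_0\in\Gamma_1$: since $f''>0$ the comparison principle yields $u\ge v$ in $B$, and then $v(x_0)>0$, respectively $\partial_\nu v(x_0)=-g'(r/N)<0$, contradicts $u(x_0)=0$, respectively $\nabla u(x_0)=0$. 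This is precisely a hand-made strong maximum principle and Hopf lemma with the radial solution as barrier, so your route via the Vazquez/Pucci--Serrin theorems is the same idea packaged as a citation; it is admissible here because $u$ is a supersolution of the homogeneous inequality with no zero-order term and $\Phi(t)=f'(t)$ is strictly increasing under \eqref{f_HP}, but the explicit barrier removes the degeneracy concern you flag and keeps the argument self-contained. One point to correct: the interior ball condition at $x_1\in\Gamma_1\setminus\{O\}$ is not a consequence of the convexity of $\Sigma$ (convexity yields supporting hyperplanes, hence exterior tangent balls, and may fail the interior ball condition at corners); it follows from the assumed smoothness of $\partial\Sigma\setminus\{O\}$, since near $x_1$ one has $\partial\Omega\subset\partial\Sigma$ and $\Omega$ coincides locally with $\Sigma$. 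Consistently, neither the statement of the lemma nor the paper's proof uses convexity of the cone at all.
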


\begin{proof}
We write $u=u^+-u^-$ and use $\varphi = u^-$ as test function in \eqref{weak sol cono}:
\begin{equation*}
0 \geq -\int_{\Omega \cap \{u<0\}} \dfrac{f'(|\nabla u|)}{|\nabla u|}\, |\nabla u^-|^2\, dx= \int_{\Omega \cap \{u<0\}} u^- \, dx  \geq 0\,,
\end{equation*}
which implies that $u\geq 0$ in $\Omega$. Moreover, if one assumes that  $u(x_0)=0$ at some point $x_0 \in \Omega \cup \Gamma_1$, then $\nabla u(x_0)=0$, which leads to a contradiction by using the comparison principle between the solution $u$ and the radial solution in a suitable ball.
\end{proof}

\begin{lemma}[Pohozaev-type identity]\label{Pohozaev identity}
Let $\Omega$ be a sector-like domain and assume that $f$ satisfies \eqref{f_HP}. Let $u\in W^{1,\infty}(\Omega)$ be a solution to \eqref{pb cono}. Then the following integral identity
\begin{equation}\label{Pohozaev}
\int_\Omega[(N+1) u-N f(|\nabla u|)]\, dx=\int_{\Gamma_0}[f'(|\nabla u|)|\nabla u|-f(|\nabla u|)]x\cdot\nu\, d\sigma
\end{equation}
holds.
\end{lemma}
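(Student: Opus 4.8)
The plan is to establish the identity by multiplying the equation $L_f u = -1$ by the vector field generator $x\cdot\nabla u$ and integrating by parts, carefully tracking the contributions of the two boundary portions $\Gamma_0$ and $\Gamma_1$. Recall that $L_f u = \dive(V_\xi(\nabla u))$ with $V(\xi) = f(|\xi|)$, so the starting point is the identity
\begin{equation*}
\int_\Omega \dive\bigl(V_\xi(\nabla u)\bigr)\,(x\cdot\nabla u)\,dx = -\int_\Omega x\cdot\nabla u\,dx.
\end{equation*}
On the left-hand side I would integrate by parts once to move the divergence off, producing a boundary term $\int_{\partial\Omega} (x\cdot\nabla u)\,V_\xi(\nabla u)\cdot\nu\,d\sigma$ and the bulk term $-\int_\Omega V_\xi(\nabla u)\cdot\nabla(x\cdot\nabla u)\,dx$. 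Using $\nabla(x\cdot\nabla u) = \nabla u + (\nabla^2 u)\,x$ and the homogeneity relation $V_\xi(\xi)\cdot\xi = f'(|\xi|)|\xi|$, together with $V_\xi(\nabla u)\cdot(\nabla^2 u)\,x = x\cdot\nabla\bigl(V(\nabla u)\bigr) = x\cdot\nabla\bigl(f(|\nabla u|)\bigr)$, the bulk term becomes $-\int_\Omega f'(|\nabla u|)|\nabla u|\,dx - \int_\Omega x\cdot\nabla\bigl(f(|\nabla u|)\bigr)\,dx$, and the last integral is handled by another integration by parts, yielding $N\int_\Omega f(|\nabla u|)\,dx - \int_{\partial\Omega} f(|\nabla u|)\,(x\cdot\nu)\,d\sigma$. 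Similarly $-\int_\Omega x\cdot\nabla u\,dx = N\int_\Omega u\,dx - \int_{\partial\Omega} u\,(x\cdot\nu)\,d\sigma$. Assembling these and using the boundary conditions finishes the computation.

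The boundary analysis is where the structure of a sector-like domain enters. On $\Gamma_1$ we have $u = 0$? — no: there $\partial_\nu u = 0$, hence $\nabla u$ is tangential, so $x\cdot\nabla u$ need not vanish, but $V_\xi(\nabla u)\cdot\nu = f'(|\nabla u|)\,\tfrac{\nabla u\cdot\nu}{|\nabla u|} = 0$, killing the first boundary term there; moreover $x\cdot\nu = 0$ on $\Gamma_1$ (noted in the excerpt just before \eqref{definizione di V}), so the $f(|\nabla u|)(x\cdot\nu)$ and $u(x\cdot\nu)$ terms also vanish on $\Gamma_1$. On $\Gamma_0$ we have $u = 0$, so the term $\int_{\Gamma_0} u\,(x\cdot\nu)\,d\sigma$ drops, and since $u$ vanishes on $\Gamma_0$ its gradient is normal there, $\nabla u = -|\nabla u|\,\nu$, giving $x\cdot\nabla u = -|\nabla u|\,(x\cdot\nu)$ and $V_\xi(\nabla u)\cdot\nu = -f'(|\nabla u|)$; thus the surviving boundary contribution on $\Gamma_0$ is exactly $\int_{\Gamma_0}\bigl[f'(|\nabla u|)|\nabla u| - f(|\nabla u|)\bigr](x\cdot\nu)\,d\sigma$, which is the right-hand side of \eqref{Pohozaev}. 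Collecting the bulk terms $N\int_\Omega f(|\nabla u|)\,dx$ (with sign) together with $N\int_\Omega u\,dx - \int_\Omega f'(|\nabla u|)|\nabla u|\,dx$ and using once more that $\int_\Omega f'(|\nabla u|)|\nabla u|\,dx$ can be traded via the weak formulation \eqref{weak sol cono} with test function $u$ — namely $\int_\Omega \tfrac{f'(|\nabla u|)}{|\nabla u|}\nabla u\cdot\nabla u\,dx = \int_\Omega u\,dx$, so $\int_\Omega f'(|\nabla u|)|\nabla u|\,dx = \int_\Omega u\,dx$ — the bulk side collapses to $\int_\Omega\bigl[(N+1)u - N f(|\nabla u|)\bigr]\,dx$.

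The main obstacle is rigor rather than algebra: the identity involves second derivatives of $u$ (through $\nabla^2 u$ appearing after integrating by parts against $x\cdot\nabla u$), but $u$ is only assumed $C^1(\Omega)\cap W^{1,\infty}(\Omega)$ and $L_f$ may be degenerate, so the computation above is only formal near the critical set $\{\nabla u = 0\}$ and near the singular vertex $O$ and the edge $\partial\Gamma_0 = \partial\Gamma_1$. The honest argument requires working with the regularized vector field $V_\xi(\nabla u) \in W^{1,2}(\Omega,\rr^N)$ — which is exactly hypothesis \eqref{key property} — rephrasing every step so that only $\dive V_\xi(\nabla u)$ and $\nabla V_\xi(\nabla u)$ appear (never $\nabla^2 u$ in isolation), and then justifying the integrations by parts by an exhaustion/cutoff procedure: cut off a neighbourhood of $O$ with a function $\eta_\varepsilon$ supported away from the vertex, perform the identity on $\Omega\setminus B_\varepsilon(O)$, and show the error terms tend to $0$ as $\varepsilon\to 0$ using $u\in W^{1,\infty}$ and the integrability of $V_\xi(\nabla u)$ in $W^{1,2}$, together with $|x|\lesssim\varepsilon$ on the annular region and $\mathcal H_{N-1}(\partial B_\varepsilon(O)\cap\Sigma)\to 0$. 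The interior degeneracy is absorbed because every quantity in the final identity — $f(|\nabla u|)$, $f'(|\nabla u|)|\nabla u|$, $V_\xi(\nabla u)$ — is continuous up to $\{\nabla u = 0\}$ by \eqref{f_HP} (indeed $f(0)=f'(0)=0$), so no genuine singularity is introduced there.
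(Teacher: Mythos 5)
Your formal computation is correct and is in substance the same Rellich--Pohozaev argument the paper uses: the authors also pair the equation with (a variant of) $x\cdot\nabla u$ (they work with $\varphi=x\cdot\nabla u-u$), exploit the divergence structure, the boundary facts $x\cdot\nu=0$ and $\partial_\nu u=0$ on $\Gamma_1$, $u=0$ and $\nu=-\nabla u/|\nabla u|$ on $\Gamma_0$, and the weak formulation \eqref{weak sol cono} tested with $u$; your bookkeeping lands exactly on \eqref{Pohozaev}.

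The gap is in your plan for making this rigorous. You propose to lean on \eqref{key property}, i.e. $\nabla_\xi V(\nabla u)\in W^{1,2}(\Omega,\rr^N)$, but \eqref{key property} is \emph{not} a hypothesis of this lemma: the statement assumes only $u\in W^{1,\infty}(\Omega)$ together with the $C^1$ regularity built into the definition of solution, so your route would prove a weaker statement than the one claimed. Moreover, even granting \eqref{key property}, the promise to rephrase every step so that only $\dive\bigl(\nabla_\xi V(\nabla u)\bigr)$ and $\nabla\bigl[\nabla_\xi V(\nabla u)\bigr]$ appear is not automatic for the term $\int_\Omega x\cdot\nabla\bigl(f(|\nabla u|)\bigr)\,dx$: to integrate it by parts you need $V(\nabla u)=f(|\nabla u|)$ itself to be weakly differentiable, and controlling $\nabla\bigl[V(\nabla u)\bigr]$ by $\nabla\bigl[\nabla_\xi V(\nabla u)\bigr]$ requires a composition/chain-rule argument (say, writing $f(|\nabla u|)$ as a function of $|\nabla_\xi V(\nabla u)|=f'(|\nabla u|)$ through $g'=(f')^{-1}$) that you do not supply and that is delicate where $f''$ degenerates. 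The paper avoids all of this with a different regularization: it replaces $f$ by $f_\varepsilon(t)=f(\sqrt{\varepsilon^2+t^2})-f(\varepsilon)$ (removing the degeneracy at $\nabla u=0$), approximates $u$ in $C^1(\overline{\Omega}_\delta)$ by smooth functions $u^n_\delta$ on truncated domains $\Omega_\delta$ obtained by removing $\delta$-neighbourhoods of \emph{both} the edge $\partial\Gamma_0=\partial\Gamma_1$ and the vertex $O$ (your cutoff treats only the vertex, but the edge where $\Gamma_0$ meets $\Gamma_1$ must also be excised), writes the Pohozaev identity in pure divergence form for these smooth approximations, and then passes to the limit in $\varepsilon$, $n$, $\delta$; since the integrated identity contains only first-order quantities, $C^1$ convergence together with $u\in W^{1,\infty}(\Omega)$ and $\mathcal{H}_{N-1}(\Gamma_\delta)\to0$ suffices, with no second-order information on $u$ at all. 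To prove the lemma as stated, you should adopt this smoothing of both $f$ and $u$ rather than the $W^{1,2}$ route.
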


\begin{proof}
We argue by approximation. For $t\geq 0$ and $\varepsilon\in(0,1)$, let
\begin{equation*}
f_{\varepsilon}(t)=f(\sqrt{\varepsilon^2+t^2})-f(\varepsilon) \,.
\end{equation*}
We define $F(t)=f'(t)t$ and $F_\varepsilon(t)=f_\varepsilon'(t)t$. From a standard argument (see for instance \cite[Lemma 4.2]{CFV}) we have that 
\begin{equation}\label{convergenze}
f_\varepsilon\rightarrow f \quad \text{and} \quad F_\varepsilon\rightarrow F \quad \text{uniformly on compact sets of  $[0,+\infty)$.}
\end{equation}
We recall that $V(\xi)=f(|\xi|)$ (see \eqref{definizione di V}) for  $\xi\in\mathbb{R}^N$, and we define $V^\varepsilon:\mathbb{R}^N\rightarrow\mathbb{R}$ as
\begin{equation*}
V^\varepsilon(\xi):=f_\varepsilon(|\xi|).
\end{equation*}
We approximate $\Omega$ by domains $\Omega_\delta$ obtained by chopping off a $\delta$-tubular neighborhood of $\partial\Gamma_0$ and a $\delta$-neighborhood of $O$.  For $n \in \mathbb{N}$, we consider $u^n_{\delta}\in C^\infty(\Omega_\delta)\cap C^1(\bar \Omega_\delta)$ such that
\begin{equation*}
u^n_{\delta} \rightarrow u \, \text{ in } \, C^1(\overline{\Omega}_\delta),
\end{equation*}
as $n$ goes to infinity (see for instance \cite[Section 2.6]{Burenkov}). $\\ $
Since
\begin{equation*}
\dive\left(x\cdot\nabla u^n_{\delta}\nabla_\xi V^\varepsilon(\nabla u^n_{\delta})\right)=x\cdot\nabla u^n_{\delta}\dive\left(\nabla_\xi V^\varepsilon(\nabla u^n_{\delta})\right)+\nabla(x\cdot\nabla u^n_{\delta})\cdot\nabla_\xi V^\varepsilon(\nabla u^n_{\delta}) 
\end{equation*}
and from 
\begin{equation*}
\begin{aligned}
\nabla(x\cdot\nabla u^n_{\delta})\cdot\nabla_\xi V^\varepsilon(\nabla u^n_{\delta})=&\nabla u^n_{\delta}\cdot\nabla_\xi V^\varepsilon(\nabla u^n_{\delta})+x\nabla^2(u^n_{\delta})\cdot\nabla_\xi V^\varepsilon(\nabla u^n_{\delta})\\
=&\dive\left(u^n_{\delta}\nabla_\xi V^\varepsilon(\nabla u^n_{\delta})\right)-u^n_{\delta}\dive\left(\nabla_\xi V^\varepsilon(\nabla u^n_{\delta})\right) \\
&+\dive(xV^\varepsilon(\nabla u^n_{\delta}))-NV^\varepsilon(\nabla u^n_{\delta}) \,,
\end{aligned}
\end{equation*}
we obtain
\begin{equation}\label{div_smart}
\dive\left(\varphi_n\nabla_\xi V^\varepsilon(\nabla u^n_{\delta})-xV^\varepsilon(\nabla u^n_{\delta})\right)=\varphi_n\dive\left(\nabla_\xi V^\varepsilon(\nabla u^n_{\delta})\right)-NV^\varepsilon(\nabla u^n_{\delta}) \,,
\end{equation}
where  
$$
\varphi_n(x)=x\cdot\nabla u^n_{\delta}(x)-u^n_{\delta}(x) \,.
$$
Moreover, from the divergence theorem we have
\begin{equation}\label{angela}
\int_{\Omega_\delta} \nabla_\xi V^\varepsilon(\nabla u^n_{\delta})\cdot\nabla\varphi_n\, dx=-\int_{\Omega_\delta} \varphi_n\dive\left( \nabla_\xi V^\varepsilon(\nabla u^n_{\delta})\right)\, dx + \int_{\partial\Omega_\delta}\varphi_n\nabla_\xi V^\varepsilon(\nabla u^n_{\delta})\cdot\nu\, d\sigma \,.
\end{equation}
We are going to apply the divergence theorem in $\Omega_\delta$; to this end we set 
\begin{equation*}
\Gamma_{0,\delta}=\Gamma_0\cap\partial\Omega_\delta\, , \quad \Gamma_{1,\delta}=\Gamma_1\cap\partial\Omega_\delta \quad \text{and} \quad \Gamma_\delta=\partial\Omega_\delta\setminus( \Gamma_{0,\delta}\cup \Gamma_{1,\delta})\, .
\end{equation*}
From \eqref{angela} and by integrating \eqref{div_smart} in $\Omega_\delta$ we obtain
\begin{equation*}
\begin{aligned}
\int_{\Omega_\delta} \nabla_\xi V^\varepsilon(\nabla u^n_{\delta})\cdot\nabla\varphi_n\, dx=&-N\int_{\Omega_\delta} V^\varepsilon(\nabla u^n_{\delta})\, dx -\int_{\Omega_\delta}\dive\left(\varphi_n\nabla_\xi V^\varepsilon(\nabla u^n_{\delta})\right)\, dx \\
&+\int_{\Omega_\delta}\dive\left(xV^\varepsilon(\nabla u^n_{\delta})\right)\, dx \,, \\
\end{aligned}
\end{equation*}
and from $x\cdot\nu=0$ on $\Gamma_{1,\delta}$, we find
\begin{equation*}
\begin{aligned}
\int_{\Omega_\delta} \nabla_\xi V^\varepsilon(\nabla u^n_{\delta})\cdot\nabla\varphi_n\, dx
=&-N\int_{\Omega_\delta} V^\varepsilon(\nabla u^n_{\delta})\, dx-\int_{\Gamma_{0,\delta}\cup\Gamma_{1,\delta}}\varphi_n\nabla_\xi V^\varepsilon(\nabla u^n_{\delta})\cdot\nu\, d\sigma \\
&+\int_{\Gamma_{0,\delta}}V^\varepsilon(\nabla u^n_{\delta})x\cdot\nu\, d\sigma \\
&-\int_{\Gamma_\delta}[\varphi_n\nabla_\xi V^\varepsilon(\nabla u^n_{\delta})-xV^\varepsilon(\nabla u^n_{\delta})]\cdot\nu\, d\sigma \, .
\end{aligned}
\end{equation*}
By taking the limit as $\varepsilon\rightarrow 0$ and then as $n\rightarrow\infty$, using that $\nabla u\cdot\nu=0$ on $\Gamma_{1,\delta}$ (since $\partial_{\nu}u=0$ on $\Gamma_1$), we obtain
\begin{equation}\label{ryan}
\begin{aligned}
\int_{\Omega_\delta} \nabla_\xi V(\nabla u)\cdot\nabla\varphi\, dx=&-N\int_{\Omega_\delta} V(\nabla u)\, dx -\int_{\Gamma_{0,\delta}}\varphi\nabla_\xi V(\nabla u)\cdot\nu\, d\sigma +\int_{\Gamma_{0,\delta}}V(\nabla u)x\cdot\nu\, d\sigma \\
&-\int_{\Gamma_\delta}[\varphi\nabla_\xi V(\nabla u)-xV(\nabla u)]\cdot\nu\, d\sigma
\end{aligned}
\end{equation} 
where we let
\begin{equation} \label{phi_def_def}
\varphi(x)=x\cdot\nabla u(x)-u(x) \,.
\end{equation}
Now, we take the limit as $\delta\rightarrow 0$. Since $u\in W^{1,\infty}(\Omega)$ and $\mathcal{H}_{N-1}(\Gamma_\delta)$ goes to $0$ as $\delta \to 0$, we have that the last term in \eqref{ryan} vanishes and we obtain
\begin{equation*}
\int_{\Omega} \nabla_\xi V(\nabla u)\cdot\nabla\varphi\, dx=-N\int_{\Omega} V(\nabla u)\, dx -\int_{\Gamma_0}\varphi\nabla_\xi V(\nabla u)\cdot\nu\, d\sigma +\int_{\Gamma_{0}}V(\nabla u)x\cdot\nu\, d\sigma \,, \\
\end{equation*}
i.e. (in terms of $f$)
\begin{equation*}
\int_\Omega \frac{f'(|\nabla u|)}{|\nabla u|}\nabla u\cdot\nabla\varphi\, dx=-N\int_{\Omega}f(|\nabla u|)\, dx - \int_{\Gamma_0}\varphi\frac{f'(|\nabla u|)}{|\nabla u|}\partial_{\nu}u\, d\sigma +\int_{\Gamma_0}f(|\nabla u|)x\cdot\nu\, d\sigma.
\end{equation*}
Since $u$ satisfies \eqref{weak sol cono}, we get
\begin{equation}\label{stato_sociale}
\int_\Omega \varphi\, dx=-N\int_{\Omega}f(|\nabla u|)\, dx - \int_{\Gamma_0}\varphi\frac{f'(|\nabla u|)}{|\nabla u|}\partial_{\nu}u\, d\sigma +\int_{\Gamma_0}f(|\nabla u|)x\cdot\nu\, d\sigma.
\end{equation}
From \eqref{phi_def_def} and since $u=0$ on $\Gamma_0$ and $\partial_\nu u=0$ on $\Gamma_1$, we have
\begin{equation*}
\int_\Omega \varphi\, dx=-(N+1)\int_{\Omega}u\, dx
\end{equation*}
and 
\begin{equation} \label{annamo}
\int_{\Gamma_0}\varphi\frac{f'(|\nabla u|)}{|\nabla u|}\partial_{\nu}u\, d\sigma=\int_{\Gamma_0}f'(|\nabla u|)|\nabla u|x\cdot\nu\, d\sigma \,,
\end{equation}
where we used the expresion of the unit exterior normal on $\Gamma_0$ given by \eqref{normale}. From \eqref{annamo} and  \eqref{stato_sociale} we obtain
\begin{equation*}
-(N+1)\int_{\Omega}u\, dx+N\int_{\Omega}f(|\nabla u|)\, dx=-\int_{\Gamma_0}f'(|\nabla u|)|\nabla u|x\cdot\nu\, d\sigma+\int_{\Gamma_0}f(|\nabla u|)x\cdot\nu\, d\sigma.
\end{equation*}
which is \eqref{Pohozaev}, and the proof is complete.
\end{proof}

We conclude this subsection by exploiting the boundary condition $\partial_\nu u=0$ on $\Gamma_1$. Before doing this, we need to recall some notation from differential geometry (see also \cite[Appendix A]{ecker}). We denote by $D$ the standard Levi-Civita connection. Recall that, given an $(N-1)$-dimensional smooth orientable submanifold $M$ of $\mathbb{R}^N$ we define the \emph{tangential gradient} of a smooth function $f:M\rightarrow\mathbb{R}$ with respect to $M$ as
\begin{equation*}
\nabla^Tf(x)=\nabla f(x)-\nu\cdot\nabla f(x)\nu
\end{equation*}
for $x\in M$, where $\nabla f$ denotes the usual gradient of $f$ in $\mathbb{R}^N$ and $\nu$ is the outward unit normal at $x$ to $M$. Moreover, we recall that the \emph{second fundamental form} of $M$ is the bilinear and symmetric form defined on $TM\times TM$ as
\begin{equation*}
	\mathrm{II}(v,w)=D\nu(v) w \cdot \nu\, ;
\end{equation*} 
a submanifold is called \emph{convex} if the second fundamental form is non-negative definite.

\begin{lemma} \label{lemma_tangential}
Let $u$ be the solution to \eqref{pb cono}. Then 
\begin{equation}\label{piove}
\nabla_\xi V(\nabla u)\cdot\nu=0 \quad \text{on} \quad \Gamma_1 \,,
\end{equation}
and
\begin{equation}\label{grad_0}
\nabla (\nabla_\xi V(\nabla u)\cdot\nu)\cdot\nabla u=0 \quad \text{on} \quad \Gamma_1 \, .
\end{equation}
\end{lemma}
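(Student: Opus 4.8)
The plan is to read off \eqref{piove} directly from the Neumann condition on $\Gamma_1$, and then to obtain \eqref{grad_0} by a tangential-differentiation argument that exploits the fact that $\nabla u$ is tangent to $\Gamma_1$.

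First I would prove \eqref{piove}. By \eqref{pb cono} we have $\partial_\nu u=\nabla u\cdot\nu=0$ on $\Gamma_1\setminus\{O\}$, i.e. $\nabla u(x)$ is tangent to $\Gamma_1$ at every such $x$. Recalling from \eqref{derivatediV} that $\nabla_\xi V(\xi)=f'(|\xi|)\,\xi/|\xi|$ for $\xi\neq 0$ and $\nabla_\xi V(0)=0$ (since $f'(0)=0$ by \eqref{f_HP}), at a point of $\Gamma_1$ where $\nabla u\neq 0$ one gets
\begin{equation*}
\nabla_\xi V(\nabla u)\cdot\nu=\frac{f'(|\nabla u|)}{|\nabla u|}\,(\nabla u\cdot\nu)=0,
\end{equation*}
while at a point where $\nabla u=0$ the whole vector $\nabla_\xi V(\nabla u)$ vanishes; in either case \eqref{piove} holds.

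For \eqref{grad_0} I would argue as follows. Since $\partial\Sigma\setminus\{O\}$ is a smooth hypersurface, extend $\nu$ to a smooth unit vector field near $\Gamma_1\setminus\{O\}$ and set $h:=\nabla_\xi V(\nabla u)\cdot\nu$. At a point $x\in\Gamma_1\setminus\{O\}$ with $\nabla u(x)\neq 0$, the standard elliptic regularity recalled above gives $u\in C^{2,\alpha}$ near $x$, so $h$ is of class $C^1$ up to $\Gamma_1$ there, and we may split $\nabla h=\nabla^T h+(\nu\cdot\nabla h)\,\nu$ along $\Gamma_1$. By \eqref{piove} the function $h$ vanishes identically on $\Gamma_1$, hence for every $C^1$ curve $\gamma$ lying in $\Gamma_1$ one has $\frac{d}{dt}h(\gamma(t))=\nabla h(\gamma(t))\cdot\gamma'(t)=0$; thus $\nabla h$ is normal to $\Gamma_1$, i.e. $\nabla^T h=0$ on $\Gamma_1$. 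Using once more that $\nabla u\cdot\nu=0$ on $\Gamma_1$ we conclude
\begin{equation*}
\nabla\big(\nabla_\xi V(\nabla u)\cdot\nu\big)\cdot\nabla u=\nabla^T h\cdot\nabla u+(\nu\cdot\nabla h)\,(\nu\cdot\nabla u)=0,
\end{equation*}
which is \eqref{grad_0}; at the points of $\Gamma_1$ where $\nabla u=0$ the left-hand side is trivially zero.

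The only delicate point — and the step I expect to be the real obstacle — is the regularity needed to make sense of $\nabla h$ pointwise on $\Gamma_1$. Away from the zero set of $\nabla u$ this is guaranteed by interior/boundary Schauder estimates for the (there nondegenerate) equation, and on the zero set the identity is trivial; moreover, in the proof of Theorem \ref{teo 1 cono} the lemma is only invoked where $\nabla u\neq 0$ (equivalently, for a.e. point of $\Gamma_1$), where the argument above is rigorous. If one wished to avoid any pointwise regularity discussion altogether, one could instead establish \eqref{grad_0} in integrated form against test functions supported near $\Gamma_1$, using the approximation scheme $f_\varepsilon$ and $u^n_\delta$ of Lemma \ref{Pohozaev identity}, but the tangential-differentiation argument is the conceptual core.
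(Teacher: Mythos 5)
Your proposal is correct and follows essentially the same route as the paper: \eqref{piove} is immediate from $\partial_\nu u=0$ and the form of $\nabla_\xi V$, and \eqref{grad_0} is obtained by taking the tangential derivative of \eqref{piove} (so that $\nabla^T(\nabla_\xi V(\nabla u)\cdot\nu)=0$ on $\Gamma_1$), decomposing the full gradient into tangential and normal parts, and using $\nabla u\cdot\nu=0$ once more. The only difference is that you add explicit regularity caveats (points where $\nabla u=0$, boundary Schauder estimates), which the paper treats implicitly.
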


\begin{proof}
Since $\partial_\nu u=0$ on $\Gamma_1$, we immediately find \eqref{piove}.
By taking the tangential derivative in \eqref{piove} we get
\begin{equation*}
0=\nabla^T (\nabla_\xi V(\nabla u)\cdot\nu) 
=\nabla (\nabla_\xi V(\nabla u)\cdot\nu)-\nu\cdot \nabla(\nabla_\xi V(\nabla u)\cdot\nu)\nu \quad \text{on} \quad \Gamma_1\, .
\end{equation*}
By taking the scalar product with $\nabla u$ we obtain 
\begin{equation*}
0=\nabla (\nabla_\xi V(\nabla u)\cdot\nu)\cdot\nabla u-\nu\cdot \nabla(\nabla_\xi V(\nabla u)\cdot\nu)\partial_\nu u  
 \, ,
\end{equation*}
and since $u_\nu=0$ on $\Gamma_1$, we find \eqref{grad_0}.
\end{proof}

\subsection{Integral Identities for $S_2$} In this Subsection we prove some integral inequalities involving $S_2(W)$ and the solution to problem \eqref{pb cono}.

\begin{lemma}\label{formula_1}
Let $\Omega \subset \mathbb{R}^N$ be a sector-like domain and assume that $f$ satisfies \eqref{f_HP}. Let $u\in W^{1,\infty}(\Omega)$ be a solution of \eqref{pb cono} such that \eqref{key property} holds. Then the following inequality
\begin{equation} \label{integral_inequality_1}
2\int_{\Omega}S_2(W)u \, dx\geq -\int_{\Omega}S^2_{ij}(W)V_{\xi_i}(\nabla u)u_j\, dx
\end{equation}
holds. Moreover the equality sign holds in \eqref{integral_inequality_1} if and only if $\mathrm{II}(\nabla^T u,\nabla^T u)=0$ on $\Gamma_1$.
\end{lemma}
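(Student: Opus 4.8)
~\textbf{Plan of proof.}
The identity \eqref{integral_inequality_1} is an integrated version of the pointwise Newton-type inequality. The plan is to start from the divergence structure of $S^2_{ij}(W)$ recorded in \eqref{derivata di S nulla}, namely $\partial_j S^2_{ij}(W)=0$ in the weak sense, and to test it against the product $u\, V_{\xi_i}(\nabla u)$. Concretely, I would first justify, by the approximation scheme already used in the proof of Lemma~\ref{Pohozaev identity} (regularize $f$ by $f_\varepsilon$, approximate $\Omega$ by $\Omega_\delta$ chopping a neighborhood of $\partial\Gamma_0$ and of $O$, and mollify $u$), that one may integrate by parts to obtain
\begin{equation*}
\int_\Omega S^2_{ij}(W)\,\partial_j\big(u\,V_{\xi_i}(\nabla u)\big)\,dx = \int_{\partial\Omega} u\,V_{\xi_i}(\nabla u)\,S^2_{ij}(W)\nu_j\,d\sigma .
\end{equation*}
Here assumption \eqref{key property} is exactly what makes $W=\nabla_\xi V(\nabla u)\nabla^2 u$ (equivalently the matrix entering $S^2_{ij}(W)$) an $L^2$ object whose weak divergence vanishes, so the integration by parts is licit.

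Next I would expand the left-hand side. Using the Leibniz rule, $\partial_j\big(u V_{\xi_i}(\nabla u)\big) = u_j V_{\xi_i}(\nabla u) + u\,\partial_j V_{\xi_i}(\nabla u) = u_j V_{\xi_i}(\nabla u) + u\, w_{ij}$. Contracting with $S^2_{ij}(W)$ and recalling from \eqref{defS_2} that $\tfrac12 S^2_{ij}(W) w_{ij} = S_2(W)$, the term $u\,w_{ij}S^2_{ij}(W)$ produces $2\int_\Omega S_2(W)u\,dx$, while the other term is precisely $\int_\Omega S^2_{ij}(W)V_{\xi_i}(\nabla u)u_j\,dx$. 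Thus
\begin{equation*}
2\int_\Omega S_2(W)u\,dx + \int_\Omega S^2_{ij}(W)V_{\xi_i}(\nabla u)u_j\,dx = \int_{\partial\Omega} u\,V_{\xi_i}(\nabla u)\,S^2_{ij}(W)\nu_j\,d\sigma .
\end{equation*}
To conclude the inequality it then suffices to show the boundary integral is $\geq 0$, and in fact to identify it as an integral over $\Gamma_1$ of a quantity controlled by $\mathrm{II}(\nabla^T u,\nabla^T u)$.

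The boundary analysis is where the real work lies, and I expect it to be the main obstacle. On $\Gamma_0$ we have $u=0$, so that piece of $\partial\Omega$ contributes nothing; on the artificial pieces $\Gamma_\delta$ the $W^{1,\infty}$ bound on $u$ together with $\mathcal H_{N-1}(\Gamma_\delta)\to0$ kills the contribution in the limit, just as in Lemma~\ref{Pohozaev identity}. So only $\Gamma_1$ survives. On $\Gamma_1$ one has $\partial_\nu u=0$, hence $\nabla u=\nabla^T u$ is tangential, and by Lemma~\ref{lemma_tangential} also $\nabla_\xi V(\nabla u)\cdot\nu=0$. Using \eqref{defS_ij}, $S^2_{ij}(W)\nu_j = -V_{\xi_j\xi_k}(\nabla u)u_{ki}\nu_j + \nu_i L_f u$, and contracting against $V_{\xi_i}(\nabla u)$ one rewrites the integrand, after using $\nabla_\xi V(\nabla u)\perp\nu$, in terms of $\nabla^2 u$ acting on tangential directions. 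The standard computation (differentiating the relation $\partial_\nu u=0$ tangentially, as in \eqref{grad_0}, to trade $\nabla^2 u$ on $TM\times TM$ for the second fundamental form of $\Gamma_1$) then converts this into a multiple of $\mathrm{II}(\nabla^T u,\nabla^T u)$; since $u>0$ on $\Gamma_1$ by Lemma~\ref{prop 2}, and $\Sigma$ is convex so $\mathrm{II}\geq0$ on $\partial\Sigma\supset\Gamma_1$, the boundary term is nonnegative, giving \eqref{integral_inequality_1}, with equality iff $\mathrm{II}(\nabla^T u,\nabla^T u)=0$ on $\Gamma_1$. The delicate points to handle carefully are: (i) the regularity needed to run the integration by parts up to $\Gamma_1$, which is why the statement only asks for the inequality and isolates \eqref{key property}; and (ii) the precise sign bookkeeping in reducing the $\Gamma_1$-integrand to $\mathrm{II}(\nabla^T u,\nabla^T u)$, where one must use both $\partial_\nu u=0$ and its tangential derivative \eqref{grad_0}, together with the fact that $V_{\xi_i\xi_j}$ is positive definite on tangential directions so that the factor multiplying $\mathrm{II}$ has a definite sign.
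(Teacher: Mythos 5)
Your algebraic core is exactly the paper's: the contraction $\tfrac12 S^2_{ij}(W)w_{ij}=S_2(W)$, the divergence-free structure \eqref{derivata di S nulla} tested against $u\,V_{\xi_i}(\nabla u)$, the vanishing of the $\Gamma_0$ contribution because $u=0$ there, and the reduction of the $\Gamma_1$ term via \eqref{piove}, \eqref{grad_0} and convexity of the cone to a nonnegative multiple of $\mathrm{II}(\nabla^T u,\nabla^T u)$ (your sign bookkeeping is consistent with the paper's computation). The gap is precisely the point you defer to ``the approximation scheme of Lemma~\ref{Pohozaev identity}'': that scheme cannot justify your boundary identity. In the Pohozaev lemma the boundary integrands involve only $u$ and $\nabla u$, which are continuous up to $\Gamma_0\cup\Gamma_1\setminus\{O\}$, so regularizing $f$, chopping $\Omega_\delta$ and smoothing $u$ suffices. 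Here the putative boundary integral $\int_{\partial\Omega}u\,V_{\xi_i}(\nabla u)S^2_{ij}(W)\nu_j\,d\sigma$ involves $W$, i.e.\ second derivatives of $u$, on the boundary; under \eqref{key property} one only knows $W\in L^2(\Omega)$, which has no pointwise (or even $L^1(\partial\Omega)$) trace. One can give the normal trace $S^2_{ij}(W)\nu_j$ a meaning in $H^{-1/2}(\partial\Omega)$ because the rows are divergence-free $L^2$ fields, but then you cannot localize it to $\Gamma_1$, split off the $\Gamma_0$ piece, or insert the pointwise identities \eqref{piove}--\eqref{grad_0} into it; and indeed the regularity of $W$ up to $\partial\Omega$ is exactly the delicate issue the paper refuses to assume (it depends on how $\Gamma_0$ and $\Gamma_1$ meet).

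The paper's proof is designed to avoid any boundary trace of $W$. First it proves the identity \eqref{Bianchini_Ciraolo} only for $\phi\in C^1_0(\Omega)$, by mollifying $a^i=V_{\xi_i}(\nabla u)$ inside $\Omega$ and using that $\Tr W^\varepsilon\equiv-1$. Then it takes $\phi=u\,\eta^\delta$ with a cutoff $\eta^\delta(x)=\delta^{-1}\dist(x,\Gamma_1)$ near $\Gamma_1$: the would-be boundary term appears as the limit of volume integrals $\int_{A_1^\delta}F\cdot\nabla\eta^\delta\,dx$ over a tubular neighborhood, which is handled by the coarea formula and by using, pointwise a.e.\ in the neighborhood, the nonnegativity of the second fundamental form of the parallel hypersurfaces $\Gamma_1^{\delta,t}$ (convexity of $\Sigma$); only after discarding that nonnegative term does one pass to the limit and use Lemma~\ref{lemma_tangential} to see the remaining contribution vanish, yielding the inequality (and the equality characterization through the discarded $\mathrm{II}$ term). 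So the statement you need is an inequality obtained in a limit, not an exact integration-by-parts identity with a boundary term; to complete your argument you would either have to assume boundary regularity of $W$ that the lemma does not grant, or replace the direct integration by parts with a cutoff/tubular-neighborhood argument of this kind.
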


\begin{proof} We split the proof in two steps.

\emph{Step 1: the following identity} 
\begin{equation}\label{Bianchini_Ciraolo}
2\int_{\Omega}S_2(W)\phi \, dx=-\int_{\Omega}S^2_{ij}(W)V_{\xi_i}(\nabla u)\phi_j\, dx 
\end{equation}
\emph{holds for every} $\phi\in C^{1}_0(\Omega)$.

For $t>0$ we set $\Omega_t=\lbrace x\in\Omega \, : \, \dist(x,\partial\Omega)>t\rbrace$.
Let $\phi\in C^1_0(\Omega)$ be a test function and let $\varepsilon_0>0$ be such that  $\Omega_{\varepsilon_0}\subset\Omega$ and $supp(\phi)\subset\Omega_{\varepsilon_0}$. For $\varepsilon<\varepsilon_0$  sufficiently small, we set 
\begin{equation*}
a^i(x)=V_{\xi_i}(\nabla u(x)) \quad \text{ for every } \,  i=1,\dots,N, \, x\in\Omega.
\end{equation*}
From \eqref{key property} we have that $a^i \in W^{1,2}(\Omega)$, $i=1,\dots,N$. With this notation, the elements $w_{ij}=\partial_{j} V_{\xi_i}(\nabla u)$ of the matrix $W$ are given by 
$$
w_{ij}=\partial_j a^i \,.
$$ 
Let $\lbrace\rho_\varepsilon\rbrace$ be a family of mollifiers and define $a^i_\varepsilon=a^i\ast\rho_\varepsilon$. Let $W^\varepsilon=(w^{\varepsilon}_{ij})_{i,j=1,\dots,N}$ where $w^{\varepsilon}_{ij}=\partial_j a_\varepsilon^i$, and notice that
\begin{equation*}
	a^i_\varepsilon\rightarrow a^i \quad \text{ in } W^{1,2}(\Omega_{\varepsilon_0}) \quad \text{ and } \quad W^\varepsilon\rightarrow W \quad \text{ in } L^{2}(\Omega_{\varepsilon_0})\, ,
\end{equation*}
as $\varepsilon \to 0$. Moreover
\begin{equation}\label{ug_traccia}
\Tr W^\varepsilon=\Tr W=-1
\end{equation}
for every $x\in\Omega_\varepsilon$. 

Let $i,j=1,\dots,N$ be fixed. We have
\begin{equation*}
\begin{aligned}
w^{\varepsilon}_{ji}w^{\varepsilon}_{ij}&=\partial_j(a^i_\varepsilon\partial_ia^j_\varepsilon)-a^i_\varepsilon\partial_j\partial_i a^j_\varepsilon \\
&=\partial_j(a^i_\varepsilon\partial_i a^j_\varepsilon)- a^i_\varepsilon\partial_i\partial_j a^j_\varepsilon \\
&=\partial_j(a^i_\varepsilon\partial_i a^j_\varepsilon)- a^i_\varepsilon\partial_i w^{\varepsilon}_{jj} \, ,
\end{aligned}
\end{equation*}
for every $x\in\Omega_\varepsilon$, and by summing up over $j=1,\dots,N$, using \eqref{ug_traccia} (hence $\partial_i\sum_j w^\varepsilon_{jj}=0$), we obtain
\begin{equation*}
\begin{aligned}
\sum_{j} w^{\varepsilon}_{ji} w^{\varepsilon}_{ij}&=\sum_{j}\partial_j(a^i_\varepsilon\partial_i a^j_\varepsilon)\\
&=w^{\varepsilon}_{ii}\Tr W^\varepsilon-\sum_j \partial_j(S^2_{ij}(W^\varepsilon)a^i_\varepsilon), \quad x\in\Omega_\varepsilon.
\end{aligned}
\end{equation*}
By summing over $i=1,\dots,N$, from \eqref{defS_2} we have
\begin{equation}\label{gesso}
2S_2(W^\varepsilon)=\sum_{i,j}\partial_j(S^2_{ij}(W^\varepsilon) a^i_\varepsilon), \quad x\in\Omega_\varepsilon.
\end{equation}
Since
\begin{equation*}
\int_{\Omega_{\varepsilon_0}}\partial_j(S^2_{ij}(W^\varepsilon) a^i_\varepsilon)\phi\, dx+\int_{\Omega_{\varepsilon_0}}S^2_{ij}(W^\varepsilon)a^i_ \varepsilon\phi_j\, dx =\int_{\partial\Omega_{\varepsilon_0}}S^2_{ij}(W^\varepsilon)a^i_\varepsilon\nu_j \phi   \, d\sigma=0 \,,
\end{equation*}
from \eqref{gesso} and by letting  $\varepsilon$ to zero, we obtain \eqref{Bianchini_Ciraolo}.

\emph{Step 2.}
Let $\delta>0$ and consider a cut-off funtion $\eta^\delta\in C^\infty_c(\Omega)$ such that $\eta^\delta=1$ in $\Omega_\delta$ and $|\nabla\eta^\delta|\leq\frac{C}{\delta}$ in $\Omega\setminus\Omega_\delta$ for some constant $C$ not depending on $\delta$. By taking $\phi(x)=u(x)\eta^\delta(x)$ for $x\in\Omega$ in \eqref{Bianchini_Ciraolo} we obtain
\begin{equation}\label{anderson}
2\int_{\Omega}S_2(W)u\eta^\delta \, dx=-\int_{\Omega}S^2_{ij}(W)V_{\xi_i}(\nabla u)u_j\eta^\delta\, dx -\int_{\Omega}S^2_{ij}(W)V_{\xi_i}(\nabla u)u\eta^{\delta}_j\, dx \, .
\end{equation}
From \eqref{key property} we have that $W \in L^2 (\Omega)$ and the dominated convergence theorem yields
\begin{equation}\label{anderson_bis}
2\int_{\Omega}S_2(W)u\eta^\delta \, dx\rightarrow 2\int_{\Omega}S_2(W)u \, dx
\end{equation}
as $\delta \to 0$. Analogously,
\begin{equation} \label{anderson_chi}
\int_{\Omega}S^2_{ij}(W)V_{\xi_i}(\nabla u) u_j\eta^\delta \, dx \rightarrow \int_{\Omega}S^2_{ij}(W)V_{\xi_i}(\nabla u) u_j \, dx
\end{equation}
as $\delta \to 0$. 

Now, we consider the last term in \eqref{anderson}. We write $\Omega$ in the following way: 
\begin{equation} \label{divisione_omega}
\Omega=A_0^\delta \cup A_1^\delta\,,
\end{equation}
where
\begin{equation*}
A_0^\delta=\lbrace x\in\Omega \, : \, \dist(x,\Gamma_0)\leq\delta\rbrace \quad \text{ and } \quad A_1^\delta=\Omega \setminus A_0^\delta.
\end{equation*}
Since $u=0$ on $\Gamma_0$, we get that 
$$
u(x)\leq ||u||_{W^{1,\infty}(\Omega)}\, \dist(x,\Gamma_0)\leq ||u||_{W^{1,\infty}(\Omega)}\, \delta
$$ 
for every $x\in A_0^\delta$ and we obtain 
\begin{equation*} 
 \left|\int_{A_0^\delta}S^2_{ij}(W)V_{\xi_i}(\nabla u) u\eta^{\delta}_j\, dx\right|\leq C_1 |A_0^\delta| \,,
\end{equation*}
where $C_1$ is a constant depending on $||u||_{W^{1,\infty}(\Omega)}$ and $\|W\|_{L^2(\Omega)}$,
which implies that
\begin{equation}\label{anderson_tris}
\lim_{\delta \to 0} \int_{A_0^\delta}S^2_{ij}(W)V_{\xi_i}(\nabla u) u\eta^{\delta}_j\, dx  = 0\, .
\end{equation}

Now we show that 
\begin{equation}\label{adesso}
 \lim_{\delta \to 0} \int_{A_1^\delta}S^2_{ij}(W(x))V_{\xi_i}(\nabla u(x)) u(x)\eta^{\delta}_j(x)\, dx\geq 0\, .
\end{equation}
By choosing $\delta$ small enough, a point $x\in A_1^\delta$ can be written in the following way: $x=\bar{x}+t\nu(\bar{x})$ where $\bar{x}=\bar{x}(x)\in\Gamma_1$ and $t=|x-\bar x|$ with $0<t<\delta$. Moreover, by using a standard approximation argument, $\eta^\delta$ can be chosen in such a way that $\eta^\delta(x)=\frac{1}{\delta}\dist(x,\Gamma_1)$ for any $x\in A_1^\delta$, so that  
\begin{equation} \label{gradiente_eta_delta}
\nabla\eta^\delta(x)=-\frac{1}{\delta}\nu(\bar x) \,,
\end{equation} 
for every $x\in A_1^\delta \setminus \Omega_\delta$. For simplicity of notation we set $F=(F_1, \ldots, F_N)$, where
\begin{equation} \label{Fj}
F_j(x)=u(x) S^2_{ij}(W(x))V_{\xi_i}(\nabla u(x))
\end{equation}
for $j=1,\ldots,N$, and hence
\begin{equation}\label{presidenza}
\int_{A_1^\delta}S^2_{ij}(W)V_{\xi_i}(\nabla u) u\eta^{\delta}_j\, dx= \int_{A_1^\delta} F(x)\cdot\nabla\eta^\delta(x)\, dx\, .
\end{equation}
Since $\nabla \eta^\delta = 0 $ in $\Omega_\delta$ and $\nabla\eta^\delta(x)=-\frac{1}{\delta}\nu(\bar x)$, for every $x\in A_1^\delta \setminus \Omega_\delta$, we have
\begin{equation*}
\begin{aligned}
\int_{A_1^\delta}F(x)\cdot\nabla\eta^\delta(x)\, dx &=-\dfrac{1}{\delta}\int_{A_1^\delta \setminus \Omega_\delta} F(x)\cdot\nu(\bar x)\, dx \\
&=-\dfrac{1}{\delta}\int_{0}^{\delta}\, dt \int_{\lbrace x\in A_1^\delta \, : \, \dist(x,\Gamma_1)=t\rbrace}F(x)\cdot\nu(\bar{x})\, d\sigma \\
\end{aligned}
\end{equation*}
where we used coarea formula. Since we are in a \emph{small} $\delta$-tubular neighborhood of (part of) $\Gamma_1$, we can parametrize $A_1^\delta \setminus \Omega_\delta$ over (part of) $\Gamma_1$ as from \cite[Formula 14.98]{GT} we obtain that 
\begin{equation}\label{co-area}
\int_{A_1^\delta}F(x)\cdot\nabla\eta^\delta(x)\, dx =-\dfrac{1}{\delta}\int_{0}^{\delta}\, dt \int_{\Gamma_1}F(\bar{x}+t\nu(\bar{x}))\cdot\nu(\bar{x})|\det(Dg)|\, d\sigma \,.
\end{equation}

We notice that, by using this notation, proving \eqref{adesso} is equivalent to prove 
\begin{equation} \label{aimstep2}
 \lim_{\delta \to 0} \int_{A_1^\delta}F(x)\cdot\nabla\eta^\delta(x)\, dx \geq 0 \,,
\end{equation}
for $\delta>0$ sufficiently small. 

From \eqref{gradiente_eta_delta}, \eqref{Fj} and the definition of $S^2_{ij}$ \eqref{def_S^2}, we have
\begin{equation*}
\begin{aligned}
F(x) \cdot  \nu (\bar x) & =  -\delta_{ij}V_{\xi_i}(\nabla u(x)) u(x)\nu_j(\bar{x}) - w_{ji}(x) V_{\xi_i}(\nabla u(x)) u(x)\nu_j(\bar{x})  \\
& = -\left\lbrace\delta_{ij}V_{\xi_i}(\nabla u(x)) u(x)\nu_j(\bar{x}) + u(x)\frac{f'(|\nabla u(x)|)}{|\nabla u(x)|}w_{ji}(x) u_i(x)\nu_j(\bar{x})\right\rbrace 
\end{aligned}
\end{equation*}
for almost every $x=\bar{x}+t\nu(\bar{x})\in A_1^\delta \setminus \Omega_\delta$, with $0 \leq t \leq \delta$. Since
\begin{equation*}
w_{ij}\nu_i u_j=\partial_j(V_{\xi_i}(\nabla u)\nu_i)u_j-V_{\xi_i}(\nabla u)\partial_j \nu_i u_j\, ,
\end{equation*}
we have
\begin{equation} \label{air_cond}
\begin{aligned}
F(x) & \cdot  \nu (\bar x) = -u(x) \nabla_{\xi} V(\nabla u(x))\cdot\nu(\bar x)  \\
& -u(x)\frac{f'(|\nabla u(x)|)}{|\nabla u(x)|}\left\lbrace  \nabla (\nabla_\xi V(\nabla u(x))\cdot\nu(\bar x))\cdot\nabla u(x)-\frac{f'(|\nabla u(x)|)}{|\nabla u(x)|} \partial_j\nu_i(\bar x)u_j(x)u_i(x) \right\rbrace 
\end{aligned}
\end{equation}
for almost every $x=\bar{x}+t\nu(\bar{x})\in A_1^\delta \setminus \Omega_\delta$, with $0 \leq t \leq \delta$. Let 
$$ 
\Gamma_1^{\delta,t} = \lbrace x\in A_1^\delta \, : \, \dist(x,\Gamma_1)=t\rbrace \,.
$$ 
We notice that if $x\in \Gamma_1^{\delta,t}$ then $\nu(\bar x)=\nu^t(x)$ where $\nu^t(x)$ is the outward normal to $\Gamma_1^{\delta,t}$ at $x$. Hence 
\begin{equation}\label{moduli}
 \partial_j\nu_i(\bar x)u_j(x)u_i(x)=\mathrm{II}_x^{\delta,t}(\nabla^T u(x),\nabla^T u(x))
\end{equation}
where $\mathrm{II}_x^{\delta,t}$ is the second fundamental form of $\Gamma_{1}^{\delta,t}$ at $x$. Since $\Sigma$ is a convex cone then the second fundamental form of $\Gamma_1 \setminus \{O\}$ is non-negative definite. This implies that the second fundamental form of $\Gamma_1^{\delta,t}$ is non-negative definite for $t$ sufficiently small \cite[Appendix 14.6]{GT} and hence
\begin{equation}\label{sole}
 \partial_j\nu_i(\bar x)u_j(x)u_i(x)\geq 0  \,.
\end{equation}
From \eqref{sole} and \eqref{air_cond} we obtain 
\begin{equation}\label{lunga_lunga}
F(x)  \cdot  \nu (\bar x) \geq -u(x) \nabla_{\xi} V(\nabla u(x))\cdot\nu(\bar x)  -u(x)\frac{f'(|\nabla u(x)|)}{|\nabla u(x)|}  \nabla (\nabla_\xi V(\nabla u(x))\cdot\nu(\bar x))\cdot\nabla u(x) 
\end{equation}
for almost every $x=\bar{x}+t\nu(\bar{x})\in A_1^\delta \setminus \Omega_\delta$, with $0 \leq t \leq \delta$. We use \eqref{lunga_lunga} in the right-hand side of \eqref{co-area} and, by taking the limit as $\delta \to 0$, we obtain 
\begin{equation*}
 \lim_{\delta \to 0} \int_{A_1^\delta}F(x)\cdot\nabla\eta^\delta(x)\, dx  \geq -\int_{\Gamma_1}u \left( \nabla_{\xi} V(\nabla u)\cdot\nu + \frac{f'(|\nabla u|)}{|\nabla u|}\nabla (\nabla_\xi V(\nabla u)\cdot\nu)\cdot\nabla u \right) d\sigma\,.
\end{equation*}
From \eqref{piove} and \eqref{grad_0} we find \eqref{aimstep2}, and hence \eqref{adesso}. From \eqref{anderson}, \eqref{anderson_bis}, \eqref{anderson_chi}, \eqref{divisione_omega}, \eqref{anderson_tris} and \eqref{adesso}, we obtain \eqref{integral_inequality_1}.

\end{proof}

\section{Proof of Theorem \ref{teo 1 cono}} \label{section_proofthm1}

\begin{proof}[Proof of Theorem \ref{teo 1 cono}]
We divide the proof in two steps. We first show that 
\begin{equation}\label{Uguaglianza}
W=-\dfrac{1}{N}Id \quad \text{ a.e. in } \Omega.
\end{equation}
and 
\begin{equation} \label{step1_II}
\mathrm{II}(\nabla^T u, \nabla^T u) = 0 \quad \text{ on } \Gamma_1\,,
\end{equation} 
and then we exploit \eqref{Uguaglianza} in order to prove that $u$ is indeed radial. 

\emph{Step 1.} 
Let $g$ be the Fenchel conjugate of $f$ (in our case $g'=(f')^{-1}$), using \eqref{derivatediV}	we get that
	\begin{equation*}
	\begin{aligned}
		\dive\left(g(|\nabla_{\xi}V(\nabla u)|)\nabla_{\xi}V(\nabla u)\right)&= g'(|\nabla_{\xi}V(\nabla u)|)\nabla|\nabla_{\xi}V(\nabla u)|V_{\xi_j}(\nabla u) + g(|\nabla_{\xi}V(\nabla u)|)\Tr(W)\\
		&= g'(f'(|\nabla u|))\dfrac{V_{\xi_i}(\nabla u)}{|\nabla_{\xi}V(\nabla u)|}\partial_j(V_{\xi_i}(\nabla u))V_{\xi_j}(\nabla u) + g(f'(|\nabla u|))\Tr(W) \,,
	\end{aligned}
	\end{equation*}
a.e. in $\Omega$, where we used \eqref{derivatediV}. Since $\partial_j V_{\xi_i}(\nabla u) = w_{ij}$ and $g'=(f')^{-1}$, we obtain
$$
\dive\left(g(|\nabla_{\xi}V(\nabla u)|)\nabla_{\xi}V(\nabla u)\right) = u_iw_{ij}V_{\xi_j}(\nabla u)+g(f'(|\nabla u|))\Tr(W)
$$ 
a.e. in $\Omega$, and using again \eqref{derivatediV} we find
$$
\dive\left(g(|\nabla_{\xi}V(\nabla u)|)\nabla_{\xi}V(\nabla u)\right) =\dfrac{f'(|\nabla u|)}{|\nabla u|}u_i w_{ij}u_j+g(f'(|\nabla u|))\Tr(W)
$$	
a.e. in $\Omega$. Since 
\begin{equation} \label{gfprimo}
g(f'(t))=tf'(t)-f(t)
\end{equation} 
and $\Tr(W)=-1$, we obtain
\begin{equation} \label{nuova}
\dive\left(g(|\nabla_{\xi}V(\nabla u)|)\nabla_{\xi}V(\nabla u)\right)= \dfrac{f'(|\nabla u|)}{|\nabla u|}u_i w_{ij}u_j+f(|\nabla u|) - |\nabla u|f'(|\nabla u|)
\end{equation}
a.e. in $\Omega$. 

Since \eqref{defS_ij}, \eqref{derivatediV} and \eqref{Lfu_trW} yield 
\begin{equation*}
-S^2_{ij}(W)V_{\xi_i}(\nabla u)u_j=\dfrac{f'(|\nabla u|)}{|\nabla u|} w_{ji}u_iu_j+ f'(|\nabla u|)|\nabla u|\, ,
\end{equation*}
a.e. in $\Omega$, from \eqref{nuova} we obtain
\begin{equation} \label{nuovissima}
-S^2_{ij}(W)V_{\xi_i}(\nabla u)u_j= \dive\left(g(|\nabla_{\xi}V(\nabla u)|)\nabla_{\xi}V(\nabla u)\right) +2 f'(|\nabla u|)|\nabla u| -  f(|\nabla u|) \,, \end{equation}
a.e. in $\Omega$.

From Lemma \ref{formula_1} and \eqref{nuovissima}, we obtain
\begin{equation*}
\begin{aligned}
2\int_{\Omega}S_2(W)u\, dx\geq&- \int_{\Omega}S^2_{ij}(W)V_{\xi_i}(\nabla u)u_j\, dx \\
=&\int_{\partial\Omega}g(|\nabla_{\xi}V(\nabla u)|)\nabla_{\xi}V(\nabla u)\cdot\nu\, d\sigma+\int_{\Omega} \left[2f'(|\nabla u|)|\nabla u| - f(|\nabla u|) \right]\, dx  \,.
\end{aligned}
\end{equation*}
From \eqref{derivatediV} and \eqref{piove} we find 
$$
2\int_{\Omega}S_2(W)u\, dx\geq \int_{\Gamma_0}g(|\nabla_{\xi}V(\nabla u)|)\dfrac{f'(|\nabla u|)}{|\nabla u|}\partial_\nu u\, d\sigma+\int_{\Omega} \left[2f'(|\nabla u|)|\nabla u| - f(|\nabla u|) \right]\, dx \,.
$$
From \eqref{derivatediV} and \eqref{overdetermined cond} we have
$$
2\int_{\Omega}S_2(W)u\, dx\geq  -g(f'(c))f'(c)|\Gamma_0|+\int_{\Omega} \left[2f'(|\nabla u|)|\nabla u| - f(|\nabla u|) \right]\, dx 
$$
and from \eqref{gfprimo} we obtain
\begin{equation}\label{eq1}
2\int_{\Omega}S_2(W)u\, dx\geq-[cf'(c)-f(c)]f'(c)|\Gamma_0|+\int_{\Omega} \left[2f'(|\nabla u|)|\nabla u| - f(|\nabla u|) \right]\, dx \,.\end{equation}
From the Pohozaev identity \eqref{Pohozaev} and \eqref{gradiente cost} we get
\begin{equation*}
(N+1)\int_\Omega u\, dx-N\int_\Omega f(|\nabla u|)\, dx=(f'(c)c-f(c))N|\Omega|\, ;
\end{equation*}
which we use in \eqref{eq1} to obtain
\begin{equation}\label{eq1_bis}
2\int_{\Omega}S_2(W)u\, dx\geq-\dfrac{f'(c)|\Gamma_0|}{N|\Omega|}\int_\Omega \left[(N+1) u-Nf(|\nabla u|) \right] dx+\int_{\Omega} \left[2 f'(|\nabla u|)|\nabla u| - f(|\nabla u|) \right]\, dx \,.
\end{equation}
We notice that from \eqref{valore_c} we have
\begin{equation*}
|\Omega|=f'(c)|\Gamma_0|,
\end{equation*}
and from \eqref{eq1_bis} we obtain
\begin{equation}\label{eq1_1}
2\int_{\Omega}S_2(W)u\, dx\geq- \dfrac{N+1}{N} \int_\Omega u \, dx+2\int_{\Omega}f'(|\nabla u|)|\nabla u|\, dx\,.
\end{equation}
By using $u$ as a test function in \eqref{weak sol cono} we have that
$$
\int_\Omega u \, dx = \int_{\Omega}f'(|\nabla u|)|\nabla u| \, dx\,,
$$
and from \eqref{eq1_1} we find
\begin{equation}\label{Huisken}
2\int_{\Omega}S_2(W)u\, dx\geq\dfrac{N-1}{N}\int_\Omega u\, dx \,.
\end{equation}
From \eqref{matrices} and using the fact that $\Tr(W)=L_f u=-1$, we get that also the reverse inequality
\begin{equation}\label{legame tra u e S_2}
\dfrac{N-1}{N}\int_\Omega u\, dx\geq\int_\Omega 2S_2(W)u\, dx
\end{equation}
holds. From \eqref{Huisken} and \eqref{legame tra u e S_2}, we conclude that the equality sign must hold in \eqref{Huisken} and \eqref{legame tra u e S_2}. From Lemma \ref{lemma matrici generiche} we have that 
$$
W=\frac{\Tr(W)}{N} Id 
$$
a.e. in $\Omega$, and since $\Tr(W)=-1$ we obtain \eqref{Uguaglianza}. Moreover, Lemma \ref{formula_1} yields \eqref{step1_II}.

\medskip

\emph{Step 2: $u$ is a radial function.} 
From \eqref{Uguaglianza} we have that 
\begin{equation*}
-\dfrac{1}{N}\delta_{ij}=\partial_{j} V_{\xi_i}(\nabla u(x)) \, ,
\end{equation*}
for every $i, j=1,\dots, N$, which implies that there exists $x_0\in\mathbb{R}^N$ such that
\begin{equation*}
\nabla_{\xi} V(\nabla u(x))=-\dfrac{1}{N}(x-x_0),
\end{equation*}
i.e. according to \eqref{derivatediV}
\begin{equation*}
\dfrac{f'(|\nabla u(x)|)}{|\nabla u(x)|}\nabla u(x)=-\dfrac{1}{N}(x-x_0) \,.
\end{equation*}
Hence 
\begin{equation*}
\nabla u(x)=- g'\left(\dfrac{|x-x_0|}{N}\right) \frac{x-x_0}{|x-x_0|} \quad \text{ in } \Omega \,.
\end{equation*}
Since $u=0$ on $\Gamma_0$, we obtain \eqref{u_radial_Lf} and in particular $u$ is radial with respect to $x_0$. Moreover, from \eqref{step1_II} we find that $x_0$ must be the origin or, if $\partial \Sigma$ contains flat regions, a point on $\partial \Sigma$.
\end{proof}

\section{Cones in space forms: proof of Theorem \ref{teo 2 cono}} \label{section_spaceforms}
The goal of this section is to give an easily readable proof of Theorem \ref{teo 2 cono}. More precisely we assume more regularity on the solution than the one actually assumed in Theorem \ref{teo 2 cono} in order to give a coincise and clear idea of the proof in this setting, and we omit the technical details which are, in fact, needed. A rigorous treatment of the argument described below can be done by adapting the (technical) details in Section \ref{section_proofthm1} and in \cite{Pacella-Tralli}.

Before starting the proof we declare some notations we use in the statement of Theorem \ref{teo 2 cono} and we are going to adopt in the following. Given a $N$-dimensional Riemannian manifold $(M,g)$, we denote by $D$ the Levi-Civita connection of $g$. Moreover given a $C^2$-map $u:M\rightarrow\mathbb{R}$, we denote by $\nabla u$ the gradient of $u$, i.e. the dual field of the differential of $u$ with respect to $g$, and by $\nabla^2u=Ddu$ the Hessian of $u$. We denote by $\Delta$ the Laplace-Betrami operator induced by $g$; $\Delta u$ can be defined as the trace of $\nabla^2u$ with respect to $g$. 
Given a vector field $X$ on an oriented Riemannian manifold $(M, g)$, we denote by $\dive X$ the divergence of $X$ with respect to $g$. If $\lbrace e_k\rbrace$ is a local orthonormal frame on $(M, g)$, then
\begin{equation*}
\dive X=\sum_{k=1}^N g(D_{e_k}X,e_k)\, ;
\end{equation*}
notice that, if $u$ is a $C^1$-map and if $X$ is a $C^1$ vector field on $M$, we have the following \emph{integration by parts} formula
\begin{equation*}
\int_\Omega g(\nabla u,\nu)\, dx=-\int_\Omega u\dive X\, dx + \int_{\partial\Omega}ug(X,\nu)\, d\sigma\, ,
\end{equation*}
where $\nu$ is the outward normal to $\partial\Omega$ and $\Omega$ is a bounded domain which is regular enough. Here and in the following, $dx$ and $d\sigma$ denote the volume form of $g$ and the induced $(N-1)$-dimensional Hausdorff measure, respectively.

\begin{proof}[Proof of Theorem \ref{teo 2 cono}]
We divide the proof in four steps.

\medskip

\emph{Step 1: the $P$-function.}
Let $u$ be the solution to problem \eqref{pb cono sf} and, as in \cite{CV1}, we consider the $P$-function defined by 
	\begin{equation*} 
		P(u)=|\nabla u|^2+\dfrac{2}{N}u+Ku^2 \,.
	\end{equation*}
Following \cite[Lemma 2.1]{CV1}, $P(u)$ is a subharmonic function and, since $u=0$ on $\Gamma_0$ and from \eqref{gradiente cost}, we have that $P(u)=c^2$ on $\Gamma_0$. Moreover, 
\begin{equation}\label{posto29}
\nabla P(u)=2\Hess u\nabla u+\dfrac{2}{n}\nabla u+2Ku\nabla u \,.
\end{equation} 
From the convexity assumption of the cone $\Sigma$, we have that 
\begin{equation}\label{posto29E}
g(\Hess u \nabla u,\nu) \leq 0 \,.
\end{equation}
Indeed, since $u_\nu = 0$ on $\Gamma_1$ and by arguing as done for \eqref{grad_0}, we obtain that 
\begin{equation*}
0= g( \nabla u_\nu , \nabla u)= g(\Hess u \nabla u,\nu) + \mathrm{II}(\nabla u , \nabla u) \geq  g(\Hess u \nabla u,\nu)   \quad \text{on} \quad \Gamma_1 \, ,
\end{equation*}
which is \eqref{posto29E}. From \eqref{posto29} and \eqref{posto29E} we obtain 
\begin{equation*}
\partial_{\nu}P(u)=2 g(\Hess u \nabla u,\nu) + \dfrac{2}{n}\partial_\nu u+2Ku\partial_\nu u\leq 0 \quad \text{ in } \, \Gamma_1\setminus\lbrace O\rbrace  \,.
\end{equation*}
Hence, the function $P$ satisfies:
\begin{equation*}
\begin{cases}
\Delta P(u)\geq 0 &\mbox{in } \Omega, \\ P(u)=c^2 &\mbox{on } \Gamma_0 \\ \partial_{\nu}P(u)\leq 0 \, &\mbox{on } \Gamma_1\setminus\lbrace O\rbrace \,.
\end{cases}
\end{equation*}
Moreover, again from  \cite[Lemma 2.1]{CV1}, we have that 
\begin{equation} \label{uguaglianza}
\Delta P(u) = 0 \quad \text{ if and only if } \quad \Hess u = \left(-\frac{1}{N}-Ku\right) g \,.
\end{equation}

\medskip

\emph{Step 2: we have }
\begin{equation}
P(u)\leq c^2 \quad \textit{ in } \, \Omega.
\end{equation}
Indeed, we multiply $\Delta P(u) \geq 0$ by $(P(u)-c^2)^+$ and by integrating by parts we obtain
$$
0 \geq \int_{\Omega \cap \{P>c^2\}} |\nabla P|^2\, dx - \int_{\partial \Omega}  (P(u)-c^2)^+ \partial_\nu P\, d\sigma \,.
$$
Since $P(u)=c^2$ on $\Gamma_0$ and  $\partial_{\nu}P(u)\leq 0 $ on $\Gamma_1$ we obtain that
$$
0 \geq \int_{\Omega \cap \{P>c^2\}} |\nabla P|^2\, dx \geq 0
$$
and hence $P(u) \leq c^2$.

\medskip

\emph{Step 3: $P(u)=c^2$}.
By contradiction, we assume that $P(u)<c^2$ in $\Omega$. Since $\dot h>0$, we have
\begin{equation*}
c^2 \int_\Omega \dot h\, dx >  \int_\Omega \dot h |\nabla u|^2\, dx + \frac{2}{n} \int_\Omega \dot h u\, dx + K \int_\Omega \dot h u^2\, dx \, . 
\end{equation*}
Since
\begin{equation*}
\dive(\dot h u \nabla u)= \dot h |\nabla u|^2 + \dot h u \Delta u + \ddot h u \partial_r u  
\end{equation*}
and
$$
\ddot h = -K h \,,
$$ 
and from $u=0$ on $\Gamma_0$ and $\partial_{\nu}u=0$ on $\Gamma_1\setminus\{ O\}$, we have that 
\begin{equation*}
\begin{split}
c^2 \int_\Omega \dot h\, dx  & >  - \int_\Omega \dot h u \Delta u\, dx - \int_\Omega \ddot h u \partial_r u\, dx + \frac{2}{n} \int_\Omega \dot h u\, dx + K \int_\Omega \dot h u^2\, dx  \\  & = (n+1)K \int_\Omega \dot h u^2\, dx +  \left(1 + \frac 2n \right) \int_\Omega \dot h u\, dx  + K \int_\Omega  h u \partial_r u\, dx \,.
\end{split}
\end{equation*}
From $\dive (h\partial_r) = n \dot h$ we have
\begin{equation*}
\dive (u^2 h \partial_r) = n \dot h u^2 + 2 h u \partial_r u \,,
\end{equation*} 
and from $u=0$ on $\Gamma_0$ and $\partial_{\nu}u=0$ on $\Gamma_1\setminus\{ O\}$ we obtain 
\begin{equation}\label{minore_stretto}
c^2 \int_\Omega \dot h\, dx  >  \left(1 + \frac 2n \right)\left(  \int_\Omega \dot h u\, dx  - K \int_\Omega  h u \partial_r u \, dx\right) \,.
\end{equation}
Now we show that if $u$ is a solution of \eqref{pb cono sf} satisfying \eqref{overdetermined cond spaceforms} then the equality sign holds in \eqref{minore_stretto}. Indeed, let $X=h\partial_r$ be the radial vector field and, by integrating formula (2.8) in \cite{CV1}, we get
\begin{equation*} 
- \frac{c^2}{n} \int_{\partial \Omega} g(X,\nu)\, d\sigma
+ \frac{n+2}{n} \int_\Omega \dot h u\, dx 
-( n-2)  K  \int_\Omega \dot h u^2\, dx + \left(\frac{2}{n}-3\right) K \int_\Omega u g(X,\nabla u)\, dx  = 0\,.
\end{equation*}
Since $\dive X=n \dot h$ we obtain
\begin{equation*} 
c^2 \int_\Omega \dot h\, dx =  \frac{n+2}{n} \int_\Omega \dot h u \, dx
-( n-2)  K  \int_\Omega \dot h u^2\, dx + \left(\frac{2}{n}-3\right) K \int_\Omega u g(X,\nabla u)  \, dx \,,
\end{equation*}
i.e.
\begin{equation*}
c^2 \int_\Omega \dot h \, dx =  \left(1 + \frac 2n \right)\left(  \int_\Omega \dot h u\, dx  - K \int_\Omega  h u \partial_r u\, dx \right) \,,
\end{equation*}
where we used that $u=0$ on $\Gamma_0$, $\partial_{\nu}u=0$ on $\Gamma_1\setminus\{ O\}$ and $g(X,\nu)=0$ on $\Gamma_1$.
From \eqref{minore_stretto} we find a contradiction and hence $P(u)\equiv c^2$ in $\Omega$.

\medskip

\emph{Step 4: $u$ is radial}. Since $P(u)$ is constant, then $\Delta P(u) = 0$ and from \eqref{uguaglianza} we find that $u$ satisfies the following Obata-type problem
\begin{equation} \label{luigifava}
\begin{cases}
\Hess u = (-\frac{1}{N}-Ku) g & \text{in } \Omega \,,\\
u=0 & \text{on } \Gamma_0 \,, \\
\partial_{\nu}u=0 & \text{on } \Gamma_1\setminus\{O\} \,.
\end{cases}
\end{equation}
We notice that the maximum and the minimum of $u$ can not be both achieved on $\Gamma_0$ since otherwise we would have that $u\equiv 0$. Hence, at least one between the maximum and the minimum of $u$ is achieved at a point $p\in\Omega\cup\Gamma_1$. Let $\gamma:I\rightarrow M$ be a unit speed maximal geodesic satisfying $\gamma(0)=p$ and let $f(s)=u(\gamma(s))$. From the first equation of \eqref{luigifava} it follows 
\begin{equation*}
f''(s)=-\dfrac{1}{N}-Kf(s) \,.
\end{equation*}
Moreover, the definition of $f$ and the fact that $\nabla u(p)=0$ yield
\begin{equation*}
f'(0)=0 \quad \text{ and } \quad f(0)=u(p),
\end{equation*}
and therefore 
\begin{equation*}
f(s)=\left(u(p)-\dfrac{1}{N}\right)H(s)-\dfrac{1}{N}.
\end{equation*}
This implies that $u$ has the same expression along any geodesic strating from $p$, and hence $u$ depends only on the distance from $p$. This means that $\Omega=\Sigma\cap B_{R}$ where $B_R$ is a geodesic ball and $u$ depends only on the distance from the center of $B_R$.
\end{proof}

\bigskip 

\noindent{\textbf{Acknowledgement.}} The authors wish to thank Luigi Vezzoni for suggesting useful remarks regarding Section \ref{section_spaceforms}. The authors have been partially supported by the ``Gruppo Nazionale per l'Analisi Matematica, la Probabilit\`a e le loro Applicazioni'' (GNAMPA) of the ``Istituto Nazionale di Alta Matematica'' (INdAM, Italy).
\medskip


\begin{thebibliography}{20} 


\bibitem{Alexandrov} A. D. Alexandrov, \emph{Uniqueness theorem for surfaces in the large I}, Vestnik Leningrad Univ., \textbf{11} (1956). (English translation: Amer. Math. Soc. Translations, Ser. 2, \textbf{21} (1962) 341-354).

\bibitem{Mingione} B. Avelin, T. Kuusi, G. Mingione, \emph{Nonlinear Calder\`on-Zygmund theory in the limiting case}, Arch. Rational Mech. Anal. \textbf{227} (2018), 227-663.

\bibitem{Baer-Figalli} E. Baer, A. Figalli, \emph{Characterization of isoperimetric sets inside almost-convex cones}, Discrete Contin. Dyn. Syst. \textbf{37} (2017), 1-14.

\bibitem{BC} C. Bianchini, G. Ciraolo, \emph{Wulff shape characterizations in overdetermined anisotropic elliptic problems}, to appear in Comm. Partial Differential Equations.

\bibitem{BNST} B. Brandolini, C. Nitsch, P. Salani, C. Trombetti, \emph{Serrin type overdetermined problems: an alternative proof}, Arch. Rational Mech. Anal. \textbf{190} (2008), 267-280.

\bibitem{Burenkov} V.I. Burenkov, \emph{Sobolev Spaces on Domains}, Teubner-texte zur Mathematik, Stuttgart (1998).

\bibitem{Cabre-RosOton-Serra}X. Cabr\'e, X. Ros-Oton, J. Serra, \emph{Sharp isoperimetric inequalities via the ABP method}, J. Eur. Math. Soc. (JEMS) \textbf{18} (2016), 2971-2998.


\bibitem{CiMa} A. Cianchi, V. Maz'ya, \emph{Second-Order Two-Sided Estimates in Nonlinear Elliptic Problems}, Arch. Rational Mech. Anal. \textbf{229} (2018), 569-599.


\bibitem{CS} A. Cianchi, P. Salani, \emph{Overdetermined anisotropic elliptic problems}, Math. Ann., \textbf{345} (2009), 859-881.


\bibitem{CMS} G. Ciraolo, R. Magnanini, S. Sakaguchi, \emph{Symmetry of minimizers with a level surface parallel to the boundary}, J. Eur. Math. Soc. \textbf{17} (2015), 2789-2804. 

\bibitem{CV1} G. Ciraolo, L. Vezzoni, \emph{On Serrin's overdetermined problem in space forms}. Preprint (arXiv:1702.05277).

\bibitem{CFV} M. Cozzi, A. Farina, E. Valdinoci, \emph{Monotonicity formulae and classification results for singular, degenerate, anisotropic PDEs}, Adv. Math., \textbf{293} (2016), 343-381.

\bibitem{Crasta} G. Crasta, \emph{Existence, uniqueness and qualitative properties of minima to radilly symmetric non-coercive non-convex variational problem}, Math. Z. \textbf{235} (2000), 569-589.

\bibitem{ecker} K. Ecker, Regularity Theory for Mean Curvature Flow, \emph{Progress in Nonlinear Differential Equations and Their Applications}, Birkh\"{a}user Basel, 2014.

\bibitem{FGK} I. Fragal\`a, F. Gazzola, B. Kawohl, \emph{Overdetermined problems with possibly degenerate ellipticity, a geometric approach.} Math. Z. \textbf{254} (2006) 117-132.

\bibitem{FarinaKawhol} A. Farina, B. Kawohl, \emph{Remarks on an overdetermined boundary value problem.} Calc. Var. Partial Differential Equations  \textbf{31} (2008), 351-357.

\bibitem{FV1} A. Farina, E. Valdinoci, \emph{Partially and globally overdetermined problems of elliptic type} Adv. Nonlinear Anal. \textbf{1} (2012), no. 1, 27-45.

\bibitem{FV2} A. Farina, E. Valdinoci, \emph{On partially and globally overdetermined problems of elliptic type} Amer. J. Math. \textbf{135} (2013), 1699-1726.

\bibitem{FMV} A. Farina, L. Mari, E. Valdinoci, \emph{Splitting theorems, symmetry results and overdetermined problems for Riemannian manifolds} Comm. Partial Differential Equations \textbf{38} (2013), no. 10, 1818-1862.

\bibitem{Figalli-Indrei} A. Figalli, E. Indrei, \emph{A sharp stability result for the relative isoperimetric inequality inside convex cones}, J. Geom. Anal. \textbf{23} (2013), 938-969.

\bibitem{Garofalo-Lewis} N. Garofalo, J.L. Lewis, \emph{A symmetry result related to some overdetermined boundary value problems} Amer. J. Math. \textbf{111} (1989), 9-33.

\bibitem{GT} D. Gilbarg, N. S. Trudinger, Elliptic partial differential qquations of
second order, Springer-Verlag, Berlin-New York, 1977.

\bibitem{Grossi-Pacella} M. Grossi, F. Pacella, \emph{Positive solutions of nonlinear elliptic equations with critical Sobolev exponent and mixed boundary conditions}, Proc. Roy. Soc. Edinburgh Sect. A \textbf{116} (1990), 23-43.

\bibitem{Kumaresan_Prajapat} S. Kumaresan, J. Prajapat, \emph{Serrin's result for hyperbolic space and sphere}, Duke Math. J. \textbf{91} (1998), 17-28.

\bibitem{Lions-Pacella-Tricarico} P.-L. Lions, F. Pacella, M. Tricarico, \emph{Best constants in Sobolev inequalities for functions vanishing on some part of the boundary and related questions}, Indiana Univ. Math. J.  \textbf{37} (1998), 301-324.

\bibitem{Lions-Pacella} P.-L. Lions, F. Pacella, \emph{Isoperimetric inequalities for convex cones}, Proc. Amer. Math. Soc. \textbf{109} (1990), 477-485.

\bibitem{Mol} R. Molzon, \emph{Symmetry and overdetermined boundary value problems}, Forum Math \textbf{3} (1991), 143-156.

\bibitem{Pacella-Tralli} F. Pacella, G. Tralli, \emph{Overdetermined problems and constant mean curvature surfaces in cones} Preprint.

\bibitem{QuiXia}
G. Qiu, C. Xia, A generalization of Reilly's formula and its applications to a new Heintze-Karcher type inequality. {\em Int. Math. Res. Not. IMRN} 2015, no. 17, 7608--7619. 

\bibitem{QX} G. Qiu, C. Xia, \emph{Overdetermined boundary value problems in $\mathbb{S}^n$}, J. Math. Study \textbf{50} (2017), no. 2, 165-173.

\bibitem{Roncoroni} A. Roncoroni,  \emph{A Serrin-type symmetry result on model manifolds: an extension of
the Weinberger argument}, Comptes Rendus - Mathématique, 356 (2018), 648-656.

\bibitem{Serrin} J. Serrin, \emph{A symmetry problem in potential theory}, Arch. Rational Mech. Anal. \textbf{43} (1971), 304-318.

\bibitem{WX} G. Wang, C. Xia, A characterization of the Wulff shape by an overdetermined anisotropic PDE, Arch. Ration. Mech. Anal. 199 (2011), no. 1, 99-115.

\bibitem{Weinberger} H. Weinberger, \emph{Remark on the preceeding paper of Serrin}, Arch. Rational Mech. Anal. \textbf{43} (1971), 319-320.

\end{thebibliography}
\end{document}